\def\rd{\mathbb{R}^d}
\def\Im{\mathop{\rm Im}\nolimits}
\def\Re{\mathop{\rm Re}\nolimits}
\def\R{\mathbb R}
\def\C{\mathbb C}
\def\N{\mathbb N}
\def\Im{\mathop{\rm Im}\nolimits}
\def\Re{\mathop{\rm Re}\nolimits}
\def\R{\mathbb R}
\def\C{\mathbb C}
\def\N{\mathbb N}
\def\ds{\displaystyle}
\def\cS{\mathcal{S}}
\newcommand{\beqsn}{\arraycolsep1.5pt\begin{eqnarray*}}
\newcommand{\eeqsn}{\end{eqnarray*}\arraycolsep5pt}
\newcommand{\beqs}{\arraycolsep1.5pt\begin{eqnarray}}
\newcommand{\eeqs}{\end{eqnarray}\arraycolsep5pt}
\newtheorem{Th}{Theorem}[section]
\newtheorem{Rem}[Th]{Remark}
\newtheorem{Lemma}[Th]{Lemma}
\newtheorem{Prop}[Th]{Proposition}
\def\pxi{\langle \xi \rangle}
\def\px{\langle x \rangle}
\def\pd{\langle D \rangle}
\def\der{\partial_{\xi}^{\alpha}\partial_{x}^{\beta}}
\renewcommand{\section}%
   {\setcounter{equation}{0}\@startsection {section}{1}{\z@}{-3.5ex plus -1ex
  minus -.2ex}{2.3ex plus .2ex}{\Large\bf}}
\title{Weighted energy estimates \\ for $p$-evolution equations}
\author[Ascanelli]{Alessia Ascanelli}
\address{Alessia Ascanelli\\
Dipartimento di Matematica ed Informatica\\Universit\`a di Ferrara\\
Via Machiavelli 35\\
44121 Ferrara\\
Italy}
\email{alessia.ascanelli@unife.it}
\author[Cappiello]{Marco Cappiello}
\address{Marco Cappiello\\
Dipartimento di Matematica ``G. Peano" \\Universit\`a di Torino\\
Via Carlo Alberto 10\\
10123 Torino\\
Italy}
\email{marco.cappiello@unito.it}
\begin{document}

\def\thefootnote{}
\footnote{ \textit{2010 Mathematics Subject Classification}: 35G10, 35S05. \\
\textit{Keywords and phrases}: $p$-evolution equations, Cauchy problem, 
pseudo-differential operators.}

\begin{abstract}
We prove energy estimates for linear $p$-evolution equations in weighted Sobolev
spaces under suitable assumptions on the behavior at infinity of the coefficients
with respect to the space variables. As a consequence we obtain well posedness for
the related Cauchy problem in the Schwartz spaces $\mathcal{S}$ and $\mathcal{S}'$.
\end{abstract}

\maketitle

\markboth{\sc Well-posedness of the Cauchy problem for $p$-evolution 
equations}{\sc A.~Ascanelli, M. Cappiello}

\section{Introduction}
Let us start by considering the Cauchy problem 
\beqs
\label{CP}
\begin{cases}
P(t,x,D_t,D_x)u(t,x)=f(t,x) & (t,x)\in[0,T]\times\R,\cr
u(0,x)=g(x) & x\in\R,
\end{cases}
\eeqs
$D=-i \partial$, where $P(t,x,D_t,D_x)$ is a differential evolution operator of the form 
\begin{equation}\label{opdiff}
P(t,x,D_t,D_x)=D_t+a_p(t)D_x^p+\sum_{j=0}^{p-1}
a_j(t,x) D_x^j\,,
\end{equation}
with $p \in \N, p \geq 2,$ $a_p\in C([0,T];\R)$ and $a_j\in C([0,T];\mathcal B^\infty)$, where $\mathcal B^\infty$ stands for the class of complex valued $C^\infty(\R_x)$ functions with uniformly bounded derivatives.

Operators of the form above are usually referred to as ``$p$-evolution 
operators"; the condition that $a_p(t)$ is real valued means that the principal symbol of $P$
(in the sense of Petrowski) 
has the real characteristic $\tau=-a_p(t)\xi^p$; by the Lax-Mizohata 
theorem, this is a necessary condition to have a unique 
solution in Sobolev spaces of the Cauchy problem \eqref{CP} in a 
neighborhood of $t=0$, for any $p\geq1$. Notice that in the case $p =1$ operator \eqref{opdiff} is strictly hyperbolic; in the case $p =2$, operators of the form \eqref{opdiff} with real characteristics are usually called ``Schr\"odinger-type evolution operators", being the Schr\"odinger operator the most relevant model in the class. 

A wide literature concerning the well-posedness of problem \eqref{CP} 
in Sobolev spaces exists for $p=1,2.$ For general $p \geq 2,$ many results are known 
when the coefficients $a_j(t,x)$ are real valued, see for instance \cite{A1, A2, AZ, AC, CC2, CHR}.
When the coefficients $a_j(t,x)$ are complex valued for some $1\leq j\leq p-1$, then we know from \cite{I1} that some decay conditions for $|x|\to\infty$ must be required on the imaginary part of the coefficients in order to obtain $H^{\infty}$-well posedness. In the papers 
\cite{I1,I2}, Ichinose has given necessary and sufficient conditions for the case $p=2$, $x\in\R$. Kajitani and Baba \cite{KB} then proved that, for $p=2$ and $a_2(t)$ constant, $\ x\in\R^n$, the Cauchy problem \eqref{CP} is $H^{\infty}$-well posed if 
\begin{equation}\label{kbab}
\Im a_1(t,x)=\mathcal O(|x|^{-\sigma}),\; \sigma\geq 1,\; 
{\rm as}\; |x|\to\infty,
\end{equation} uniformly with respect to $t\in[0,T]$. Second order equations with $p=2$ and decay conditions as $|x|\to\infty$ have been considered, for example, in \cite{ACC, CR}.
Cicognani and Colombini \cite{CC} treated the case $p=3$ proving $H^{\infty}$-well posedness under the conditions
\begin{equation} \label{assCC}\begin{array}{l}
 |\Im a_2|\leq Ca_3(t)\langle x\rangle^{-1},
 \\
|\Im a_1|+|\Re \partial_x a_2|\leq Ca_3(t)\langle x\rangle^{-1/2}.
\end{array} \end{equation}
Recently, the first author et al. \cite{ABZ} extended the results of \cite{CC} and \cite{KB} to the case $p \geq 4$, 
giving sufficient conditions for $H^\infty$ well posedness of the Cauchy problem for the operator \eqref{opdiff};
results in \cite{ABZ} have then been generalized to pseudo-differential systems in \cite{AB}, and to higher order equations in \cite{AB2};  semi-linear 3-evolution equations have been then studied in \cite{ABZ2}.

\medskip

In this paper we want to consider the Cauchy problem \eqref{CP}
when $P(t,x,D_t,D_x)$ is an evolution operator of the form 
\begin{equation}\label{op}
P(t,x,D_t,D_x)=D_t+a_p(t,D_x)+\sum_{j=0}^{p-1}
a_j(t,x, D_x)\,,
\end{equation}
$p \in \N, p \geq 2,$ where $a_j$ are pseudo-differential operators with symbols $a_j$ of order $j$ for $0\leq j\leq p$, and for every $t\in[0,T]$, $x,\xi\in\R$ we have: $a_p(t,\xi)\in\R$,  $a_j(t,x,\xi)\in \C$ for $0\leq j\leq p-1$. 

In  \cite{AB} it has been proved the following:
\begin{Th}
The Cauchy problem (\ref{CP}) for the operator (\ref{op}) is $H^\infty$ well posed under the assumptions:
\beqs
\label{1}
|\partial_\xi a_p(t,\xi)|\geq C_p|\xi|^{p-1}\qquad\forall t\in[0,T],
\ |\xi|\gg1,
\eeqs
for some $C_p>0$ and
\beqs
\label{3}
&&|\Im\partial_\xi^\alpha a_j(t,x,\xi)|\leq C_\alpha\langle x
\rangle^{-\frac j{p-1}}\langle \xi\rangle_h^{j-\alpha},\ 1\leq j\leq p-1\\
\label{4}
&&|\Im \partial_\xi^\alpha D_xa_j(t,x,\xi)|
\leq C_\alpha\langle x
\rangle^{-\frac{j-1}{p-1}}\langle\xi\rangle_h^{j-\alpha},\ 2\leq j\leq p-1\\
\label{5}
&&|\Im \partial_\xi^\alpha D_x^\beta a_j(t,x,\xi)|\leq C_\alpha
\langle x\rangle^{-\frac{j-[\beta/2]}{p-1}}\langle\xi\rangle_h^{j-\alpha},
\ 1\leq\left[\frac\beta2\right]\leq j-1,\ 3\leq j\leq p-1
\eeqs
for all $(t,x,\xi)\in[0,T]\times\R^2$ and for some $C_\alpha>0$, where $\langle \cdot\rangle_h=\sqrt{h^2+|\cdot|^2}$, $h\geq 1$.  More precisely, there exists $\sigma>0$ such that for all
$f\in C([0,T];H^s)$ and $g\in H^s$ there is a unique solution
$u\in C([0,T];H^{s-\sigma})$ of \eqref{CP}, \eqref{op} which satisfies the following energy
estimate:
\beqs
\label{Eab}
\|u(t,\cdot)\|^2_{s-\sigma}\leq C_s\left(\|g\|^2_s+
\int_0^t\|f(\tau,\cdot)\|^2_s \,d\tau\right)\qquad
\forall t\in[0,T],
\eeqs
for some $C_s>0$. 
\end{Th}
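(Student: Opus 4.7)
The strategy is to conjugate $P$ by a suitable pseudodifferential operator $e^{\Lambda(x,D)}$ so that the resulting operator is susceptible to a plain $L^2$ energy estimate, at the price of a finite loss $\sigma$ of derivatives. Setting $v(t,x)=e^{\Lambda(x,D)}u(t,x)$, I would build the weight iteratively as
$$\Lambda(x,\xi)=\sum_{k=1}^{p-1}\lambda_k(x,\xi),$$
where each $\lambda_k$ is a real symbol of order zero in $\xi$ (with at most a logarithmic factor $\log\langle\xi\rangle_h$) and bounded in $x$. Its role is to absorb, at order $k$, the bad imaginary part $\Im a_k$ through the commutator identity $i[a_p(t,D_x),\lambda_k(x,D)]\sim -\partial_\xi a_p(t,\xi)\,\partial_x\lambda_k(x,\xi)$ modulo lower-order terms. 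The ellipticity of $\partial_\xi a_p$ for $|\xi|\geq h$ makes it legitimate to define
$$\lambda_{p-1}(x,\xi):=\int_0^x\frac{\Im a_{p-1}(t,y,\xi)}{\partial_\xi a_p(t,\xi)}\,dy,$$
and the decay $|\Im a_{p-1}|\leq C\langle x\rangle^{-1}\langle\xi\rangle_h^{p-1}$ makes the integrand bounded by $C'\langle y\rangle^{-1}$ uniformly in $\xi$, so $\lambda_{p-1}$ grows only like $\log\langle x\rangle$. The lower levels $\lambda_{p-2},\dots,\lambda_1$ are chosen analogously to compensate both the original $\Im a_k$ and the new terms produced by commuting $a_{k+1},\dots,a_{p-1}$ through the already-constructed $\lambda_{k+1},\dots,\lambda_{p-1}$. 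The exponents $j/(p-1)$ in the decay of $\Im a_j$, together with the further bounds on $D_x^\beta\Im a_j$, are precisely what is needed to keep each $\lambda_k$ within this logarithmic threshold after the successive integrations in $x$.

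Once $\Lambda$ is in hand, working in the parameter-dependent symbol calculus associated with $\langle\xi\rangle_h$ one shows that $e^{\pm\Lambda(x,D)}$ is a pseudodifferential operator of some finite order $M>0$ that maps $H^s\to H^{s\mp M}$ continuously, and that the conjugate decomposes as
$$P_\Lambda:=e^{\Lambda(x,D)}\,P\,e^{-\Lambda(x,D)}=D_t+a_p(t,D_x)+Q(t,x,D_x)+R(t,x,D_x),$$
with $Q$ of order $p-1$ whose symbol satisfies $\Im Q\geq -c_0$ modulo $S^{-\infty}$, and $R$ of order at most zero. The sharp G\r{a}rding inequality applied to $\Im Q$ then gives $-2\langle\Im Q\,v,v\rangle\leq C\|v\|_0^2$. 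Combined with $\partial_t v=-iP_\Lambda v+i\,e^{\Lambda}f$ and the basic $L^2$ identity, Gronwall's lemma yields
$$\|v(t,\cdot)\|_0^2\leq C\Bigl(\|v(0,\cdot)\|_0^2+\int_0^t\|e^{\Lambda}f(\tau,\cdot)\|_0^2\,d\tau\Bigr),\qquad t\in[0,T].$$
Inverting the conjugation via the Sobolev continuity of $e^{\pm\Lambda}$ produces the stated estimate with a finite loss $\sigma$ proportional to $M$. Existence for $f\in C([0,T];H^s)$, $g\in H^s$ then follows by a parabolic regularization $P_\varepsilon=P+i\varepsilon\langle D_x\rangle^{2p}$ together with a compactness argument exploiting the uniform estimate; uniqueness comes from the same estimate applied to the adjoint $P^*$.

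The main obstacle is the construction of $\Lambda$. At each step $\lambda_k$ must solve its defining transport equation only up to errors that the next $\lambda_{k-1}$ can in turn absorb, and the total accumulated weight must stay within the logarithmic threshold so that $e^\Lambda$ still belongs to a usable symbol class. The assumptions on $\Im D_x^\beta a_j$ with $1\leq[\beta/2]\leq j-1$, though they look ad hoc at first, are exactly what controls the symbols arising when commuting $a_j$ through the already-selected $\lambda_m$ with $m>k$: each extra $x$-derivative costs one integration in $x$, hence one less power of decay. Verifying that these bookkeeping identities close consistently for every $j$ from $1$ up to $p-1$, and that the final loss $\sigma$ stays finite, is where the technical heart of the proof lies.
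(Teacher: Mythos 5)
Your overall strategy---conjugation by exponentials of real, zero-order-in-$\xi$ weights obtained by integrating the decaying imaginary parts in $x$, positivity via sharp G{\aa}rding, an $L^2$ Gronwall estimate, regularization for existence---is indeed the machinery behind this theorem (it is the scheme of \cite{AB,ABZ}, which the present paper adapts in Sections 2--3). But the construction of $\Lambda$ as written does not prove the $H^\infty$ statement, for a concrete reason: your $\lambda_{p-1}(x,\xi)=\int_0^x \Im a_{p-1}/\partial_\xi a_p\,dy$ grows like $\ln\langle x\rangle$, hence is unbounded in $x$ (contradicting your own requirement that the $\lambda_k$ be bounded in $x$), so $e^{\pm\Lambda}$ behaves like $\langle x\rangle^{\pm M}$ and does \emph{not} map $H^s\to H^{s\mp M}$ for any finite $M$: a polynomial weight in $x$ is not bounded between unweighted Sobolev spaces, so the conjugation cannot even be set up in the $H^s$ scale. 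The missing device is the cutoff used in the quoted works, namely the factor $\psi\left(\langle y\rangle/\langle\xi\rangle_h^{p-1}\right)$ inside the $x$-integral (compare \eqref{26}): truncating the integration at $\langle x\rangle\lesssim\langle\xi\rangle_h^{p-1}$ converts the $\ln\langle x\rangle$ growth into $(p-1)\ln\langle\xi\rangle_h$, so that $e^{\pm\lambda_{p-1}}$ becomes a symbol of finite positive order in $\xi$ and order zero in $x$; this is exactly the origin of the finite loss $\sigma$ in \eqref{Eab}. Dropping that cutoff, as you implicitly do, is precisely the move the present paper makes in the SG framework, and it yields Theorem \ref{main} (no loss of derivatives, but loss of decay in weighted spaces), not the $H^\infty$ result under discussion.

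Two further points would make the argument fail as written. First, dividing by $\partial_\xi a_p(t,\xi)$ and integrating $\Im a_{p-1}(t,y,\xi)$ makes $\Lambda$ depend on $t$, while the coefficients are only \emph{continuous} in $t$; then $t\mapsto e^{\Lambda(t)}$ need not be differentiable and the conjugation of $D_t$ produces a term $(\partial_t e^{\Lambda})e^{-\Lambda}$ which may not exist. The construction in the paper avoids this: the weights are $t$-independent, built only from the sign function $\omega(\xi/h)$ of $\partial_\xi a_p$ (constant in $t$ for $|\xi|\geq R$ by \eqref{assAC2} and continuity) and from large constants $M_{p-k}$; one does not solve your transport equation exactly, one only arranges $-\Im a_{p-1}+\partial_\xi a_p\,\partial_x\lambda_{p-1}\geq 0$ for large $|\xi|$, which is all that sharp G{\aa}rding requires. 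Second, a single application of sharp G{\aa}rding to the order-$(p-1)$ block $Q$ bounds the quadratic form from below only up to a remainder of order $p-2$, i.e.\ by $-C\|v\|_{(p-2)/2}^2$; the bound $-2\langle \Im Q\,v,v\rangle\leq C\|v\|_0^2$ you invoke does not follow when $p\geq 3$. One must apply the G{\aa}rding decomposition level by level (orders $p-1,p-2,\dots,1$), checking at each step that the remainder retains the $\langle x\rangle$-decay so it can be absorbed before the next application; this forces the iterative choice of $M_{p-1},\dots,M_1$ and is exactly the bookkeeping you defer, where the hypotheses \eqref{4}, \eqref{5} enter.
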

Formula \eqref{Eab} shows that the Cauchy problem \eqref{CP}, \eqref{op} is $H^\infty$ well posed with loss of $\sigma$ derivatives, in the sense that the solution is less regular then the Cauchy data. This phenomenon, which is usual in the theory of degenerate hyperbolic equations, appears so also in the theory of non-degenerate p-evolution equations for $p \geq 2$, and has been yet observed in \cite{CC2, I2, KB}. Notice that assumptions \eqref{1}-\eqref{5} are consistent with the conditions in \eqref{kbab}, \eqref{assCC}. 
The loss of derivatives appearing in \eqref{Eab} for the solution of (\ref{CP}) is explicitly computed in \cite{AB}, and it can be avoided by slightly strenghtening the sole assumption (\ref{3}) for $j=p-1$.
Formula (\ref{Eab}) gives so an accurate information about the regularity of the solution, but, in spite of the very precise decay conditions on the coefficients, it does not say anything about the behavior of the solution as $|x|\to\infty$.

This suggests us to change the setting of the Cauchy problem (\ref{CP}) to gain the possibility of
giving similar precise information on the behavior of the solution for $|x| \to \infty;$ namely, one
could try to obtain energy estimates in suitable weighted Sobolev spaces and well-posedness in
the Schwartz spaces $\mathcal{S}(\R), \mathcal{S}'(\R).$ 
\\
Results of the 
above type have been proved for strictly hyperbolic equations $(p=1)$ by Cordes \cite{Co}; we also 
recall similar results when the coefficients are not Lipschitz continuous in $t$, see \cite{AsCa, AsCa2}.
The natural framework consists in dealing with pseudo differential operators with symbols in the classes $SG^{m_1,m_2}=SG^{m_1,m_2}(\R^2)$,
with $m_j \in \R, j=1,2$, defined as the  
class of all functions $p(x,\xi) \in C^{\infty}(\R^{2})$ satisfying the following estimates:
\begin{equation} \label{symbols}
\| p\|_{\alpha, \beta }:= \sup_{(x,\xi) \in \R^{2}}\pxi^{-m_{1}+\alpha}
\px^{-m_{2}+\beta}|\der p(x,\xi)| <\infty 
\end{equation}
for every $\alpha, \beta \in \N.$ This class can be considered as a particular case of general H\"ormander classes, see
\cite[Chapter XVIII]{Ho}. A detailed specific calculus for this class is presented in \cite{Co}, \cite{Pa}. 
In the following we shall prove energy estimates in the weighted Sobolev spaces $H_{s_1,s_2}(\R)$, $s_j \in \R,j=1,2,$ defined as the space of all $u \in \mathcal{S}'(\R)$ satisfying the following condition:
\begin{equation} \label{sobolev} \|u \|_{s_1,s_2}=\|\px^{s_{2}}\pd^{s_{1 }}u\|_{L^{2}} <\infty,
\end{equation}
where we denote by $\pd^{s_1}$ the Fourier multiplier with symbol $\pxi^{s_1}$.
It is worth to recall that for $s_{2}=0$ we recapture the standard Sobolev spaces and that the following identities hold:
\begin{equation}\label{S}
\bigcap_{s_1,s_2 \in \R}H_{s_1,s_2}(\R)= \mathcal{S}(\R), \qquad \qquad \bigcup_{s_1,s_2 \in \R }H_{s_1,s_2}(\R)= \mathcal{S}'(\R).
\end{equation}
Moreover we recall that $\mathcal{S}(\R)$ is dense in $H_{s_1,s_2}(\R)$ for any $s_1,s_2 \in \R.$

\medskip
The main result of the paper is the following.

\begin{Th}\label{main}
Let $P(t,x,D_t,D_{x})$ be an operator of the form \eqref{op} and assume that the following conditions hold:
\beqs
\label{assAC1}
&& a_p\in C([0,T]; SG^{p,0}),
\\
&&
\label{assAC2}
|\partial_\xi a_p(t,\xi)|\geq C_p|\xi|^{p-1}\qquad\forall t\in[0,T],
\ |\xi|\gg1,\ {\it with\ } C_p>0,
\\
\label{assAC3}
&&a_{j} \in C([0,T]; SG^{j,-j/(p-1)}), \quad j=0,\ldots,p-1. 
\eeqs
Then, the Cauchy problem (\ref{CP})
is well-posed in $\mathcal{S}(\R), \mathcal{S}'(\R)$. More precisely,
there exists $\sigma>0$ such that for all $s_{1},s_{2} \in \R$,
$f\in C([0,T];H_{s_1,s_2}(\R))$ and $g\in H_{s_1,s_2}(\R)$ there is a unique solution
$u\in C([0,T];H_{s_1,s_2-\sigma}(\R))$ which satisfies the following energy
estimate:
\beqs
\label{E}
\|u(t,\cdot)\|^2_{s_1,s_2-\sigma}\leq C_s\left(\|g\|^2_{s_1,s_2}+
\int_0^t\|f(\tau,\cdot)\|^2_{s_1,s_2} \,d\tau\right)\qquad
\forall t\in[0,T],
\eeqs
for some $C=C(s_1,s_2)>0$.
\end{Th}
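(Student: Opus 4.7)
My plan is to adapt the Kajitani--Baba / Ichinose conjugation approach, previously used in \cite{KB, I1, I2, AB} to handle the imaginary parts of the subprincipal symbols, but carried out entirely within the SG pseudodifferential calculus so that the weighted $H_{s_1,s_2}$ norms are tracked systematically. Concretely, I would look for an invertible pseudodifferential operator $e^{\Lambda(t,x,D_x)}$, with $\Lambda$ a symbol of SG-order $(0,\sigma)$ for some $\sigma>0$, such that the conjugated operator $P_\Lambda := e^{-\Lambda}\,P\,e^{\Lambda}$ has the form $D_t+a_p(t,D_x)+Q(t,x,D_x)$ with $\Im Q$ bounded below (modulo a bounded perturbation) in the operator sense. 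Once this is achieved, setting $v := e^{\Lambda}u$ and differentiating $\|v(t,\cdot)\|_{L^2}^2$, the sharp G\aa{}rding inequality in the SG calculus applied to $\Im Q$ together with Gronwall's lemma produces an $L^2$ estimate for $v$; undoing the conjugation and using the SG mapping properties $e^{\pm\Lambda}:H_{s_1,s_2}\to H_{s_1,s_2\mp\sigma}$ yields \eqref{E}, with the loss concentrated in the $s_2$ index because $\Lambda$ grows only in $\langle x\rangle$.

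\textbf{Construction of $\Lambda$.} I would define $\Lambda$ as a finite sum $\lambda=\sum_{j=1}^{p-1}\lambda_j$ of symbols of decreasing SG-orders, with each $\lambda_j$ obtained via an integral of the form
\[
\lambda_j(t,x,\xi)=M_j\,\chi(\xi)\int_0^x \frac{\Im a_j(t,y,\xi)}{\partial_\xi a_p(t,\xi)}\,\langle \xi\rangle_h^{-(j-1)}\,dy,
\]
where $\chi$ localises to the region $|\xi|\gg1$ in which \eqref{assAC2} guarantees $|\partial_\xi a_p|\geq C|\xi|^{p-1}$. The symbolic Poisson bracket $\{a_p,\lambda_j\}=-\partial_\xi a_p\cdot\partial_x\lambda_j$ is then designed to cancel, modulo lower-order contributions, the imaginary part of $a_j$ appearing in $P$. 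Asymptotic summation and a careful choice of the constants $M_j$ produce a $\lambda$ with $e^{\pm\lambda}$ belonging to a weighted SG class of $x$-order at most $\sigma$: the decay rate $\langle y\rangle^{-j/(p-1)}$ supplied by \eqref{assAC3} is exactly what is required so that the iterated integrals over $y$ grow in $x$ at worst polynomially (indeed logarithmically in the critical cases), giving a symbol of bounded SG-order.

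\textbf{Energy estimate and main obstacle.} The expansion
\[
P_\Lambda-D_t = a_p(t,D_x)+\sum_{j=0}^{p-1}a_j(t,x,D_x)+[\,a_p(t,D_x),\Lambda\,]+\sum_j[\,a_j,\Lambda\,]+(D_t\Lambda)+R,
\]
with $R$ of lower SG-order, then has imaginary part bounded below by a constant thanks to the cancellations just described; sharp G\aa{}rding in $SG^{0,0}$ closes the $L^2$ estimate for $v$. To obtain \eqref{E} for arbitrary $s_1,s_2$ I would further conjugate by the elliptic SG operator $\langle x\rangle^{s_2-\sigma}\langle D\rangle^{s_1}$, whose commutator with $P$ is a lower-order SG perturbation absorbed by the same energy argument; well posedness in $\mathcal{S}$ and $\mathcal{S}'$ follows from \eqref{S}. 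The principal technical obstacle is the construction of $\Lambda$: one must check that after all conjugations the cumulative remainder $R$ genuinely falls in $SG^{-1,-1}$ (so that sharp G\aa{}rding is applicable), that the loss $\sigma$ in $\langle x\rangle$ remains finite and independent of $s_1,s_2$, and that the integrand defining $\lambda_j$ is integrable in $y$ in a uniform sense with respect to $\xi$ in the localised region; this is where the exponent $-j/(p-1)$ in \eqref{assAC3} turns out to be exactly sharp.
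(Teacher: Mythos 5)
Your overall strategy (conjugation inside the SG calculus, sharp G{\aa}rding plus Gronwall, loss appearing only in the weight index $s_2$, extension to general $s_1,s_2$ by conjugating with $\px^{s_2}\pd^{s_1}$) is indeed the one the paper follows, but two steps in your construction of $\Lambda$ would fail as written. First, for $1\leq j\leq p-2$ your $\lambda_j$ contains $\int_0^x\langle y\rangle^{-j/(p-1)}(\dots)\,dy$ with no truncation of the $y$-integration, so $\lambda_j$ grows like $\langle x\rangle^{1-j/(p-1)}$, a \emph{positive} power of $\langle x\rangle$; then $e^{\pm\lambda_j}$ grows like $\exp(c\langle x\rangle^{1-j/(p-1)})$ and does not belong to any class $SG^{0,\sigma}$, contrary to your claim that polynomial growth of the exponent yields a symbol of bounded SG order (only the critical level $j=p-1$, where the integral is logarithmic, produces a polynomial weight and hence the finite loss $\sigma$). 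The paper's definition \eqref{26} avoids exactly this by inserting the cutoff $\psi\bigl(\langle y\rangle/\langle\xi\rangle_h^{p-1}\bigr)$, which confines the integral to $\langle x\rangle\lesssim\langle\xi\rangle_h^{p-1}$ and makes $\lambda_{p-k}\in SG^{0,0}$ for $k\geq2$; in the complementary region the term one must control is already bounded, which is why the lower bounds there are only $\geq -C'$ and sharp G{\aa}rding is still applied level by level. Without this cutoff your conjugators leave the calculus and the whole symbolic machinery (composition, inversion by Neumann series, mapping properties on $H_{s_1,s_2}$) is unavailable.

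Second, your $\lambda_j$ depends on $t$ through $\Im a_j(t,y,\xi)/\partial_\xi a_p(t,\xi)$, and your expansion of $P_\Lambda$ contains the term $D_t\Lambda$; but the coefficients are only assumed \emph{continuous} in $t$, so $D_t\Lambda$ need not exist and cannot be absorbed as a lower-order perturbation. The paper sidesteps this by taking $\lambda_{p-k}$ independent of $t$: only the sign of $\partial_\xi a_p(t,\xi)$ enters (constant in $t$ for $|\xi|\geq R$ by \eqref{assAC2}, via the function $\omega$), and instead of exactly cancelling $\Im a_j$ one \emph{dominates} it, using the decay rates in \eqref{assAC3} and choosing the constants $M_{p-1},M_{p-2},\dots$ successively large so that the real part of each order-$(p-\ell)$ block is bounded below, after which Theorem \ref{tha1} (sharp G{\aa}rding with SG-control of the remainder) is applied at every level. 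A related, smaller point: the factor $\langle\xi\rangle_h^{-(j-1)}$ in your integrand is dimensionally off — the bracket $\partial_\xi a_p\,\partial_x\lambda_j$ would then have $\xi$-order $1$ rather than $j$ and could not compensate $\Im a_j$ for $j\geq2$; the correct normalization (as in \eqref{26} with $j=p-k$) is $\langle\xi\rangle_h^{j-p+1}$, i.e. division by the size of $\partial_\xi a_p$ alone. Finally, note that the paper performs the $p-1$ conjugations iteratively precisely so that the constant at level $p-n$ can be chosen after those at levels $p-1,\dots,p-n+1$; if you conjugate once with $e^{\sum_j\lambda_j}$ you must still justify this hierarchical choice and control the cross terms, which your sketch does not address.
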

\begin{Rem}
{\rm
The energy estimate \eqref{E} shows that we can obtain well-posedness in $\mathcal{S}(\R)$ and$ \mathcal{S}'(\R)$ {\it without any loss of derivatives} for the solution $u$ of \eqref{CP}, paying this with a modification of the rate of decay/growth at infinity of the solution with respect to the Cauchy data. The solution of \eqref{CP} has so the same regularity as the Cauchy data; but, if we start from data with a prescribed decay at infinity, then a loss of decay appears in the solution; similarly, if the data have a fixed polynomial growth at infinity, then the solution presents a stronger growth. 
}
\end{Rem}
\begin{Rem}
{\rm
The proof of Theorem \ref{main} is in part inspired by \cite{ABZ}, but it takes advantage of the fact that in the new framework we are considering we can admit initial data with polynomial growth with respect to the space variable. We also observe that if we approach the problem by using exactly the technique of \cite{ABZ} we obtain well posedness in $\mathcal{S}(\R), \mathcal{S}'(\R)$ with the same loss of derivatives as in \cite{ABZ}, whereas the behavior at infinity of the solution remains the same as the initial data. Thus, on one hand, if we can admit a solution of \eqref{CP} less regular then the data, we have it; on the other hand, if we are looking for a solution with the same regularity as the Cauchy data, we can find it but this solution presents a ``worse'' behavior at infinity. Moreover the solution exists uniquely and the precise value of $\sigma$ is computed, see formulas \eqref{estloss} and \eqref{choice!}.
}
\end{Rem}

\begin{Rem}
{\rm If the condition (\ref{assAC3}) with $j=p-1$ is strengthened into $$a_{p-1}\in C([0,T]; SG^{p-1, -(1+\epsilon)}),$$ for any $\epsilon>0$, then the Cauchy problem (\ref{CP}) is well posed in $\mathcal S(\R)$, $\mathcal  S'(\R)$ without loss of derivatives and without modification of the behavior at infinity. }
\end{Rem}

\begin{Rem}{\rm 
We observe that the assumption \eqref{assAC3} in Theorem \ref{main} can be slightly weakened without changing the argument of the proof. Namely we can replace the condition \eqref{assAC3} with the following 
\beqs\label{refinement}
\Re a_j\in C([0,T]; SG^{j,0}), \;\Im a_j\in C([0,T]; SG^{j,-j/(p-1)}), \qquad 0\leq j\leq p-1.
\eeqs
The argument of the proof remains essentially the same but it involves more complicate notation. For this reason we prefer to present our main result using the more simple assumption \eqref{assAC3}. We refer to Remark \ref{finalrem} at the end of the paper for some comments on the more refined result. \\
%
Finally we observe that if $a_{j}(t,x,D_{x})$ are differential operators, assumptions \eqref{assAC1}, \eqref{assAC2}, \eqref{refinement} are consistent with the ones given in \cite{ABZ, CC, KB} for the corresponding case $a_p(t)>C_p>0$ $\forall t\in[0,T]$.}
\end{Rem}


\begin{section}{Preliminaries}
In this section we collect some basic notions on SG classes of pseudo-differential operators and prove some preliminary results which will be used in the proof of Theorem \ref{main} in the next section.
\subsection{SG-pseudo-differential operators}
A detailed exposition on the classes $SG^{m_{1},m_{2}}$ defined in the Introduction can be found in \cite{Co}. Here we recall only some 
basic facts which will be used in the proof of our result. In general, fixed $d \in \N \setminus \{0\}$, the space $SG^{m_{1},m_{2}}(\R^{2d})$ is 
the space of all functions $p(x,\xi) \in C^{\infty}(\R^{2d})$ satisfying the following estimates:
\begin{equation} \label{dsymbols}
\| p\|_{\alpha, \beta }:= \sup_{(x,\xi) \in \R^{2d}}\pxi^{-m_{1}+|\alpha|}
\px^{-m_{2}+|\beta|}|\der p(x,\xi)| <\infty 
\end{equation}
for every $\alpha, \beta \in \N.$ We can associate to every $p \in SG^{m_1,m_2}(\R^{2d})$ a pseudo-differential operator defined by 
\begin{equation} \label{pseudooperator}
Pu(x)=p(x,D)u(x) = (2\pi)^{-d} \int_{\rd} e^{i\langle x,\xi\rangle} p(x,\xi) \hat{u}(\xi)\, d\xi.
\end{equation}
The operator $p(x,D)$ is a linear continuous map  $\mathcal{S}(\rd)\to\cS(\rd)$ which
extends to a continuous map $\mathcal{S}'(\rd)\to\cS'(\rd)$.  
Concerning the action of these operators on weighted Sobolev spaces, we have that if $p\in SG^{m_{1},m_{2}}(\R^{2d})$, then the map $$p(x,D): H_{s_{1},s_{2}}(\rd) \to H_{s_{1}-m_{1},s_{2}-m_{2}}(\rd)$$ is continuous for every $s_{1}, s_{2} \in \R,$ where the space $H_{s_1,s_2}(\rd)$ is obviously defined in arbitrary dimension as the space of all $u \in \mathcal{S}'(\rd)$ satisfying \eqref{sobolev}. We also recall the following result concerning the composition and the adjoint of SG operators.
\begin{Prop}
\label{composition}
Let $p \in SG^{m_{1},m_{2}}(\R^{2d})$ and $q \in SG^{m'_{1},m'_{2}}(\R^{2d})$. Then there exists a symbol $s \in SG^{m_{1}+m'_{1},m_{2}+m'_{2}}(\R^{2d})$
such that $p(x,D)q(x,D) = s(x,D)+R$ where $R$ is a smoothing operator $\cS'(\rd) \to \cS(\rd).$ Moreover, $s$ has the following asymptotic expansion 
$$s(x,\xi) \sim \sum_{\alpha} \alpha!^{-1}\partial_{\xi}^{\alpha}p(x,\xi) D_{x}^{\alpha}q(x,\xi)$$ i.e. for every $N \geq 1,$ we have
$$s(x,\xi) - \sum_{|\alpha|<N} \alpha!^{-1}\partial_{\xi}^{\alpha}p(x,\xi) D_{x}^{\alpha}q(x,\xi) \in SG^{m_{1}+m'_{1}-N,m_{2}+m'_{2}-N}(\R^{2d}).$$
\end{Prop}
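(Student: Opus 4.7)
The plan is to compute the composition $p(x,D)q(x,D)$ on $\mathcal{S}(\rd)$ via Fourier analysis, write its formal symbol as an oscillatory integral, and extract the asymptotic expansion by Taylor expansion. For $u \in \mathcal{S}(\rd)$, inserting the definitions and expressing $q(x,D)u$ through its Fourier transform formally yields
\[ p(x,D)q(x,D)u(x) = (2\pi)^{-d} \int e^{ix\xi} s(x,\xi)\, \hat u(\xi)\, d\xi, \qquad s(x,\xi) = (2\pi)^{-d} \iint e^{-iy\cdot\eta} p(x,\xi+\eta)\, q(x+y,\xi)\, dy\, d\eta. \]
The integral for $s$ does not converge absolutely, so I would interpret it as an oscillatory integral, justified by repeated integration by parts using $\langle y \rangle^{-2k}(1-\Delta_\eta)^k e^{-iy\eta} = e^{-iy\eta}$ and $\langle \eta \rangle^{-2k}(1-\Delta_y)^k e^{-iy\eta} = e^{-iy\eta}$, together with the SG estimates on $p,q$ and the Peetre inequality $\langle \xi+\eta \rangle^{m_1} \leq C \langle \xi \rangle^{m_1} \langle \eta \rangle^{|m_1|}$.

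Next I would apply Taylor's formula in the $\eta$ variable to $p(x,\xi+\eta)$,
\[ p(x,\xi+\eta) = \sum_{|\alpha|<N} \frac{\eta^\alpha}{\alpha!}\, \partial_\xi^\alpha p(x,\xi) + N \sum_{|\alpha|=N} \frac{\eta^\alpha}{\alpha!} \int_0^1 (1-\theta)^{N-1} \partial_\xi^\alpha p(x,\xi+\theta\eta)\, d\theta, \]
and substitute into the oscillatory integral for $s$. Writing $\eta^\alpha e^{-iy\eta} = (-D_y)^\alpha e^{-iy\eta}$ and integrating by parts in $y$ turns each polynomial term into
\[ \frac{1}{\alpha!}\, \partial_\xi^\alpha p(x,\xi) \cdot (2\pi)^{-d} \iint e^{-iy\eta} D_y^\alpha q(x+y,\xi)\, dy\, d\eta = \frac{1}{\alpha!}\, \partial_\xi^\alpha p(x,\xi)\, D_x^\alpha q(x,\xi), \]
by Fourier inversion at $y=0$, producing exactly the claimed asymptotic expansion up to a remainder $r_N$ coming from the integral form of the Taylor remainder.

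The main technical step is to show $r_N \in SG^{m_1+m'_1-N,\,m_2+m'_2-N}(\R^{2d})$. Differentiating $r_N$ under the integral sign produces expressions of the form $\iint e^{-iy\eta} \eta^\alpha (\partial_\xi^{\alpha+\alpha'} p)(x,\xi+\theta\eta)\, (D_x^{\beta+\beta'} q)(x+y,\xi)\, dy\, d\eta\, d\theta$; one then integrates by parts in $y$ (to absorb the $\eta^\alpha$ factor and gain decay in $\eta$) and in $\eta$ (to gain decay in $y$), using the SG bounds on $p,q$ together with the Peetre-type inequalities $\langle \xi+\theta\eta \rangle^s \leq C_s \langle \xi \rangle^s \langle \eta \rangle^{|s|}$ and $\langle x+y \rangle^s \leq C_s \langle x \rangle^s \langle y \rangle^{|s|}$, uniformly in $\theta \in [0,1]$. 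Performing enough integrations in $\eta$ generates the factor $\langle y \rangle^{-2k}$ needed for convergence in $y$, while enough integrations in $y$ generate the factor $\langle \eta \rangle^{-2k}$; the remaining $\xi$-growth is $\langle \xi \rangle^{m_1+m'_1-N-|\alpha'|}$ and the $x$-growth is $\langle x \rangle^{m_2+m'_2-N-|\beta'|}$, giving precisely the claimed SG estimate for $r_N$.

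Finally, to produce a genuine symbol $s$, I would invoke the Borel-type summation available in SG classes: the terms $s_\alpha := \alpha!^{-1} \partial_\xi^\alpha p \cdot D_x^\alpha q$ belong to $SG^{m_1+m'_1-|\alpha|,\,m_2+m'_2-|\alpha|}$, and by inserting suitable cut-off functions $\chi_\alpha(\xi,x)$ that vanish on a neighborhood of the origin and tend to $1$ at infinity at a rate depending on $|\alpha|$, one defines $s = \sum_\alpha \chi_\alpha s_\alpha \in SG^{m_1+m'_1,\,m_2+m'_2}$ with $s - \sum_{|\alpha|<N} s_\alpha \in SG^{m_1+m'_1-N,\,m_2+m'_2-N}$ for every $N$. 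Combining this with the remainder estimate above shows that $p(x,D)q(x,D) - s(x,D)$ has a symbol in $\bigcap_N SG^{-N,-N}(\R^{2d})$, whose Schwartz kernel lies in $\mathcal{S}(\R^{2d})$; such an operator maps $\mathcal{S}'(\rd)$ continuously into $\mathcal{S}(\rd)$, which is the asserted smoothing property.
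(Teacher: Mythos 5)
Your argument is correct, and it is essentially the classical Kumano-go/Cordes proof of the composition theorem adapted to $SG$ classes: reduce to the oscillatory-integral symbol $s(x,\xi)=(2\pi)^{-d}\iint e^{-iy\cdot\eta}p(x,\xi+\eta)q(x+y,\xi)\,dy\,d\eta$, Taylor-expand in $\eta$, integrate by parts, and control the remainder via Peetre's inequality in \emph{both} the $\xi$- and the $x$-variable (the latter being the point that distinguishes the $SG$ setting and yields the gain $\langle x\rangle^{-N}$ as well as $\langle\xi\rangle^{-N}$). Note that the paper does not prove this proposition at all: it is recalled as a known ingredient of the $SG$ calculus with references to Cordes and Parenti, so your write-up supplies a proof the paper deliberately omits. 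One small remark: your final appeal to Borel summation is redundant. The oscillatory integral already defines a genuine symbol $s\in SG^{m_1+m_1',m_2+m_2'}$ with $p(x,D)q(x,D)=s(x,D)$ exactly (so one may take $R=0$), and your remainder estimate for $r_N$ is precisely the statement that this $s$ admits the asserted asymptotic expansion; the Borel construction is only needed if one starts from the formal series without the exact integral formula. Your closing observation that a symbol in $\bigcap_N SG^{-N,-N}$ has Schwartz kernel in $\mathcal S(\R^{2d})$ and hence maps $\mathcal S'(\rd)\to\mathcal S(\rd)$ is correct and is exactly the sense in which remainders in the $SG$ calculus are smoothing.
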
   

\begin{Prop} Let $p \in SG^{m_{1},m_{2}}(\R^{2d})$ and let $P^{\ast}$ be the $L^{2}$-adjoint of $p(x,D)$. Then there exists a symbol $p^{\ast} \in SG^{m_{1},m_{2}}(\R^{2d})$ such that $P^{\ast}=p^{\ast}(x,D)+R'$, where $R'$ is a smoothing operator $\cS'(\rd) \to \cS(\rd).$ Moreover, $p^{\ast}$ has the following asymptotic expansion 
$$p^{\ast}(x,\xi) \sim \sum_{\alpha} \alpha!^{-1}\partial_{\xi}^{\alpha}D_{x}^{\alpha}\overline{p(x,\xi)}$$ i.e. for every $N \geq 1,$ we have
$$p^{\ast}(x,\xi) - \sum_{|\alpha|<N} \alpha!^{-1}\partial_{\xi}^{\alpha}D_{x}^{\alpha}\overline{p(x,\xi)} \in SG^{m_{1}-N,m_{2}-N}(\R^{2d}).$$
\end{Prop}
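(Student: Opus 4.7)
The plan is to first rewrite $P^{\ast}$ as an amplitude (oscillatory integral) operator and then reduce the amplitude to a standard left symbol. Starting from the pairing $\langle p(x,D)u,v\rangle_{L^{2}}$ for $u,v\in\cS(\rd)$ and using the formula \eqref{pseudooperator}, I exchange the order of integration (justified by the rapid decay of $\hat u$ and of $v$ together with the polynomial growth of $p$) to obtain
\begin{equation*}
P^{\ast}v(y)=(2\pi)^{-d}\iint e^{i\langle y-x,\xi\rangle}\,\overline{p(x,\xi)}\,v(x)\,dx\,d\xi,
\end{equation*}
i.e.\ an oscillatory integral operator whose amplitude $a(x,\xi):=\overline{p(x,\xi)}$ is independent of $y$ and lies in $SG^{m_{1},m_{2}}(\R^{2d})$. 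The task is therefore to show that such an operator differs from $p^{\ast}(x,D)$, with $p^{\ast}\in SG^{m_{1},m_{2}}$, by a smoothing remainder.

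The candidate symbol $p^{\ast}$ is produced by a formal Taylor expansion of $\overline{p(x,\xi)}$ in $x$ at the point $y$: for every $N\geq 1$,
\begin{equation*}
\overline{p(x,\xi)}=\sum_{|\alpha|<N}\frac{(x-y)^{\alpha}}{\alpha!}\,\partial_{x}^{\alpha}\overline{p(y,\xi)}+r_{N}(x,y,\xi).
\end{equation*}
Substituting into the oscillatory integral, using the identity $(x-y)^{\alpha}e^{i\langle y-x,\xi\rangle}=i^{|\alpha|}\partial_{\xi}^{\alpha}e^{i\langle y-x,\xi\rangle}$, integrating by parts in $\xi$, and using $\partial_{x}=iD_{x}$, the $\alpha$-th polynomial term contributes exactly the operator with left symbol $(\alpha!)^{-1}\partial_{\xi}^{\alpha}D_{x}^{\alpha}\overline{p(x,\xi)}$. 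Each such term lies in $SG^{m_{1}-|\alpha|,m_{2}-|\alpha|}(\R^{2d})$, so by the SG-version of Borel's summation theorem (see \cite{Co}) there exists $p^{\ast}\in SG^{m_{1},m_{2}}(\R^{2d})$ whose asymptotic expansion coincides with the prescribed formal sum, and this is taken as the definition of $p^{\ast}$.

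It remains to prove that $R':=P^{\ast}-p^{\ast}(x,D)$ is smoothing. Fixing $N\geq 1$, I would write $R'=R'_{N}+T_{N}$, where $R'_{N}$ is the oscillatory integral operator with amplitude $r_{N}(x,y,\xi)$ and $T_{N}$ is a standard left SG-operator of order $(m_{1}-N,m_{2}-N)$ coming from the Borel tail of $p^{\ast}$. The key point is to bound the kernel
\begin{equation*}
K_{N}(y,x)=(2\pi)^{-d}\int e^{i\langle y-x,\xi\rangle}r_{N}(x,y,\xi)\,d\xi.
\end{equation*}
The integral remainder $r_{N}$ contains the factor $(x-y)^{\alpha}$ with $|\alpha|=N$, which upon integrating by parts in $\xi$ gains $N$ orders of decay in $\pxi$; the identities $\langle y-x\rangle^{-2M}(1-\Delta_{\xi})^{M}e^{i\langle y-x,\xi\rangle}=e^{i\langle y-x,\xi\rangle}$ and $\pxi^{-2M}(1-\Delta_{x})^{M}e^{i\langle y-x,\xi\rangle}=e^{i\langle y-x,\xi\rangle}$ then allow one to trade further $\xi$-derivatives for decay in $y-x$ and further $x$-derivatives for decay in $\xi$. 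Combined with the $SG$ weight $\px^{m_{2}-|\beta|}$ intrinsic to $p$, this yields, for every $M$ and $N$, a bound of the form $|K_{N}(y,x)|\leq C_{M,N}\langle y-x\rangle^{-2M}\px^{m_{2}-N}$, so that $R'_{N}$ maps $H_{s_{1},s_{2}}$ into $H_{s_{1}+N,s_{2}+N}$. Letting $N\to\infty$ and combining with \eqref{S}, I conclude that $R':\cS'(\rd)\to\cS(\rd)$. The main technical obstacle is the \emph{simultaneous} $x$- and $\xi$-bookkeeping characteristic of the SG calculus: the classical reduction of amplitudes to symbols in Hörmander classes only needs decay in $\xi$, while here the integration by parts in $x$ must be performed without destroying the polynomial growth in $x$ encoded by the second SG index. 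This is precisely the care developed in the SG calculus of \cite{Co,Pa}, and once the weighted integration by parts is executed consistently all the estimates above go through uniformly.
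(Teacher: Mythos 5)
The paper itself gives no proof of this Proposition: it is recalled as part of the standard SG calculus, with the details deferred to \cite{Co} and \cite{Pa}. Your strategy --- rewrite $P^{\ast}$ as an oscillatory integral with right amplitude $\overline{p(x,\xi)}$, Taylor-expand in $x$ at $y$, convert $(x-y)^{\alpha}$ into $\partial_{\xi}^{\alpha}$ on the phase and integrate by parts to read off the left symbol $\alpha!^{-1}\partial_{\xi}^{\alpha}D_{x}^{\alpha}\overline{p}$, Borel-sum, and estimate the remainder --- is exactly the classical amplitude-to-symbol reduction used in those references, and the algebra of the expansion (the factors $i^{|\alpha|}$, the appearance of $D_{x}^{\alpha}$) is correct.

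There is, however, one concrete gap in the remainder estimate. The integral form of the Taylor remainder involves $\partial_{x}^{\alpha}\overline{p}\bigl(y+\theta(x-y),\xi\bigr)$, $|\alpha|=N$, whose SG bound carries the weight $\langle y+\theta(x-y)\rangle^{m_{2}-N}$. This is comparable to $\px^{m_{2}-N}$ (equivalently $\langle y\rangle^{m_{2}-N}$) only in a region $|x-y|\leq\varepsilon\langle y\rangle$; off the diagonal the segment from $y$ to $x$ may pass near the origin while $x$ and $y$ are large (take $y=-x$, $\theta=1/2$), and then the pointwise bound $|K_{N}(y,x)|\leq C_{M,N}\langle y-x\rangle^{-2M}\px^{m_{2}-N}$ that you assert cannot be extracted from the Taylor remainder by the integrations by parts you describe. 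The standard repair, which is the step actually carried out in \cite{Co} and \cite{Pa}, is to insert a cutoff $\chi(x,y)$ supported where $|x-y|\leq\varepsilon\langle y\rangle$, perform the symbol reduction only on the localized amplitude (where $\langle x\rangle$ and $\langle y\rangle$ are equivalent and all your estimates do go through), and to show separately that the off-diagonal piece is already a smoothing operator by repeated integration by parts in $\xi$, using that $|x-y|$ dominates a constant times $\langle x\rangle+\langle y\rangle$ there so that $\langle x-y\rangle^{-2M}$ yields arbitrary joint decay in $(x,y)$. You correctly identify the simultaneous $x$--$\xi$ bookkeeping as the obstacle, but the missing ingredient is precisely this diagonal localization; with it inserted, your argument is complete.
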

We also recall the definition of the class $S^m(\R^{2d}), m \in \R,$ defined as the space of all symbols $p(x,\xi) \in C^{\infty}(\R^{2d})$ satisfying
$$|\der p(x,\xi)| \leq C_{\alpha \beta}\pxi^{m-|\alpha|}, \qquad (x,\xi) \in \R^{2d}$$ for every $\alpha, \beta \in \N^d.$
It is important for the sequel to notice that 
\begin{equation}\label{inclusion}
SG^{m_1,m_2}(\R^{2d}) \subset S^{m_1}(\R^{2d}) \end{equation}
for any $m_1,m_2 \in \R$ with $m_2 \leq 0$ and that the operators with symbols in $S^0(\R^{2d})$ map continuously $H_{s_1,s_2}(\rd)$ to itself for every $s_1,s_2 \in \R.$ 
\par 
In the proof of Theorem \ref{main} we shall also use the sharp G{\aa}rding inequality applied to SG operators. This result is known as a particular case of \cite[Thm. 18.6.14]{Ho}. However, for our purposes we also need a precise estimate of the order of the remainder with respect to $\xi$, which has been proved only for symbols in the H\"ormander classes $S^{m}(\R^{2d})$, see \cite[Thm. 4.2]{KG}. Nevertheless, we have to observe that the operators we shall consider have negative order with respect to $x$. Hence, in view of the inclusion \eqref{inclusion}, we can base the proof of this result on the classical sharp G{\aa}rding inequality for standard H\"ormander symbols and estimate the order of the remainder term with respect to $\xi$ by looking at its classical asymptotic expansion. Namely we have the following result.

\begin{Th}\label{tha1}
Let $m_1\geq 0, m_2\leq 0,$ $a \in SG^{m_1,m_2}(\R^{2d})$ with $\Re a(x,\xi)\geq0$.
Then there exist pseudo-differential operators $Q=q(x,D)$, $\tilde{R}=\tilde{r}(x,D)$ and $R_0=r_0(x,D)$
with symbols, respectively, $q \in SG^{m_1,m_2}(\R^{2d})$, $\tilde{r} \in SG^{m_1-1,m_2}(\R^{2d})$ and $r_0 \in S^0(\R^{2d})$
such that
\begin{equation}
a(x,D)=q(x,D)+\tilde{r}(x,D)+r_0(x,D) \label{2.29} 
\end{equation}
\begin{equation} \label{low}
\Re\langle q(x,D) u,u\rangle\geq0\qquad\forall u\in \cS(\rd).
\end{equation}
\end{Th}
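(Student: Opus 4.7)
The strategy is to reduce to the classical sharp G\aa rding inequality in H\"ormander's class $S^{m_1}$ and then recover the SG structure from the explicit Friedrichs symmetrization and its asymptotic expansion. Since $m_2\leq 0$, the inclusion \eqref{inclusion} places $a$ in $S^{m_1}(\R^{2d})$, so \cite[Thm.~18.6.14]{Ho} (with the sharper remainder estimate in \cite[Thm.~4.2]{KG}) produces a symbol $q\in S^{m_1}$, constructed by Friedrichs symmetrization, such that $\Re\langle q(x,D)u,u\rangle\geq 0$ for all $u\in\cS(\rd)$ and $a-q\in S^{m_1-1}$.

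The first task is to upgrade $q$ from $S^{m_1}$ to $SG^{m_1,m_2}$. The Friedrichs symmetrization has the explicit form
$$q(x,\xi)=\iint a(y,\eta)\,\Phi\bigl(x-y,(\eta-\xi)/\langle\xi\rangle^{1/2}\bigr)\langle\xi\rangle^{-d/2}\,dy\,d\eta,$$
with $\Phi$ a Schwartz Gaussian-type kernel. Differentiating under the integral and combining the SG estimates on $a$ with the Peetre-type inequality $\langle y\rangle^{m_2}\leq C\langle x\rangle^{m_2}\langle x-y\rangle^{|m_2|}$ and the rapid decay of $\Phi$ in $(x-y)$, one obtains $|\der q(x,\xi)|\leq C_{\alpha\beta}\langle\xi\rangle^{m_1-|\alpha|}\langle x\rangle^{m_2-|\beta|}$; hence $q\in SG^{m_1,m_2}$. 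The positivity $\Re\langle q(x,D)u,u\rangle\geq 0$ is read off from the wave-packet representation $\langle q(x,D)u,u\rangle=\iint a(y,\eta)|\langle u,g_{y,\eta}\rangle|^2\,dy\,d\eta$ together with $\Re a\geq 0$.

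By the standard asymptotic calculus, the Friedrichs remainder admits the expansion
$$a(x,\xi)-q(x,\xi)\sim\sum_{N\geq 1}r_N(x,\xi),\qquad r_N\in SG^{m_1-N,m_2-N}.$$
Fix an integer $N_0>m_1$ and set $\tilde r:=\sum_{N=1}^{N_0-1}r_N$ and $r_0:=a-q-\tilde r$. For every $N\geq 1$ we have $SG^{m_1-N,m_2-N}\subset SG^{m_1-1,m_2}$, so $\tilde r\in SG^{m_1-1,m_2}$. On the other hand $r_0\in SG^{m_1-N_0,m_2-N_0}$ with $m_1-N_0<0$ and $m_2-N_0\leq 0$, so \eqref{inclusion} yields $r_0\in S^{m_1-N_0}\subset S^0$. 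Combined with the properties of $q$ established above, this produces the decomposition \eqref{2.29} together with the positivity \eqref{low}.

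The main obstacle is the SG-upgrade of $q$: the classical theorem only guarantees $q\in S^{m_1}$, and the $\langle x\rangle^{m_2}$ decay has to be carried through the symmetrization formula by hand, crucially exploiting $m_2\leq 0$ so that the Peetre inequality has the right sign. Once this is in place, the remainder decomposition follows by pure bookkeeping in the asymptotic expansion.
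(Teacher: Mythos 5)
Your strategy coincides with the paper's: use the inclusion $SG^{m_1,m_2}(\R^{2d})\subset S^{m_1}(\R^{2d})$ (valid because $m_2\leq 0$) to invoke the classical sharp G{\aa}rding theorem, and then split the remainder $a-q$ into a part in $SG^{m_1-1,m_2}$ plus a part in $S^0$ by inspecting its asymptotic expansion. Your additional verification that the Friedrichs-symmetrized symbol $q$ itself belongs to $SG^{m_1,m_2}$ goes beyond what the paper writes (its proof only extracts the positivity of $q$ and the structure of the remainder), and is a reasonable supplement; note however that your symmetrization formula is mis-scaled: the smoothing in the space variable must occur at scale $\langle\xi\rangle^{-1/2}$, i.e.\ the kernel should involve $\Phi\bigl(\langle\xi\rangle^{1/2}(x-y),\langle\xi\rangle^{-1/2}(\eta-\xi)\bigr)$ with the corresponding normalization; with unit scale in $x-y$ the wave-packet positivity argument and the gain of one $\xi$-order in the remainder both fail.

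The concrete error is the claimed expansion $a-q\sim\sum_{N\geq1}r_N$ with $r_N\in SG^{m_1-N,m_2-N}$: the Friedrichs remainder does not gain decay in $x$ and $\xi$ simultaneously. By the expansion of Kumano-Go (Theorem 4.2, the one the paper quotes), the remainder is a sum of terms $\psi_{\alpha,\beta}(\xi)\,\partial_\xi^\alpha D_x^\beta a(x,\xi)$ with $\psi_{\alpha,\beta}\in SG^{(|\alpha|-|\beta|)/2,0}$, so each term lies in $SG^{m_1-(|\alpha|+|\beta|)/2,\;m_2-|\beta|}$; for $\beta=0$, $|\alpha|\geq2$ there is no improvement of the $x$-order at all (take $a(x,\xi)=\chi(x)\langle\xi\rangle^{m_1}$ with $\chi\in SG^{0,m_2}$: the term $\psi_{2,0}\partial_\xi^2a$ behaves like $\chi(x)\langle\xi\rangle^{m_1-1}$, which is not in $SG^{m_1-1,m_2-1}$). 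Fortunately your final bookkeeping only uses the weaker, correct facts: the first term $\psi_1(\xi)D_xa$ is in $SG^{m_1-1,m_2-1}$, every term with $|\alpha+\beta|\geq2$ is in $SG^{m_1-1,m_2}$, and cutting the expansion at $|\alpha+\beta|\leq 2m_1-1$ leaves a remainder in $S^0$. With the orders corrected in this way your argument closes, and it is then exactly the paper's proof.
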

\begin{proof} Since $m_2 \leq 0,$ then $SG^{m_1,m_2}(\R^{2d}) \subset S^{m_1}(\R^{2d})$. Hence, the classical G{\aa}rding inequality gives the existence of two symbols $q$ and $r$ such that $a(x,D) =q(x,D)+r(x,D)$ and $q(x,D)$ satisfies \eqref{low}. Let us now consider the asymptotic expansion of the remainder term $r(x,D).$ By Theorem 4.2 in \cite{KG}, we have that 
\begin{equation}\label{restosharpgarding}
r(x,\xi)\sim\psi_1(\xi)D_x a(x,\xi)+
\sum_{|\alpha+\beta|\geq2}\psi_{\alpha,\beta}(\xi) \partial_\xi^\alpha D_x^\beta a(x,\xi),
\end{equation}
for some real valued functions $\psi_1, \psi_{\alpha , \beta}$ with $\psi_1 \in SG^{-1,0}(\R^{2d})$ and $\psi_{\alpha, \beta} \in SG^{(|\alpha|-|\beta|)/2,0}(\R^{2d}).$
In particular, we have that
$$r(x,\xi)= \psi_1(\xi)D_x a(x,\xi)+
\sum_{2\leq |\alpha+\beta|\leq 2m_1-1}\psi_{\alpha,\beta}(\xi) \partial_\xi^\alpha D_x^\beta a(x,\xi) + r_0(x,\xi),$$
for a symbol $r_0 \in S^0(\R^{2d}).$
Moreover, it is easy to verify that $\psi(\xi)D_xa(x,\xi) \in SG^{m_1-1,m_2-1}(\R^{2d})$ and that 
$$\sum_{2\leq |\alpha+\beta| \leq 2m_1-1}\psi_{\alpha,\beta}(\xi) \partial_\xi^\alpha D_x^\beta a(x,\xi) \in SG^{m_1-1,m_2}(\R^{2d}).$$
Then we have that the symbol $$\tilde{r}(x,\xi) = \psi_1(\xi)D_x a(x,\xi)+
\sum_{2\leq |\alpha+\beta|\leq 2m_1-1}\psi_{\alpha,\beta}(\xi) \partial_\xi^\alpha D_x^\beta a(x,\xi) \in SG^{m_1-1,m_2}(\R^{2d}).$$ This concludes the proof.
\end{proof}

\begin{Rem} {\rm
In the sequel of the paper we will often replace the weight function $\pxi$ with $\pxi_h= (h^2+|x|^2)^{1/2}$ for some $h \geq 1$ to prove our results. It is clear that this modification does not change the definition of the class $SG^{m_1,m_2}(\R^{2d})$ and of the spaces $H^{s_1,s_2}(\rd),$ and their properties.}
\end{Rem}

\subsection{Changes of variables and conjugations}
The idea of the proof of Theorem \ref{main} is to prove an energy estimate in $L^2(\R)$ for the operator
\beqs\label{A}
iP=\partial_t+ia_p(t,D_x)+\displaystyle\sum_{j=0}^{p-1}ia_j(t,x,D_x)=\partial_t+A(t,x,D_x).
\eeqs
We have
\beqs
\nonumber
\frac{d}{dt}\|u\|_0^2=&&2\Re\langle\partial_t u,u\rangle
=2\Re\langle iPu,u\rangle-2\Re\langle Au,u\rangle\\
\label{56}
\leq&&\|f\|_0^2+\|u\|_0^2-2\Re\langle Au,u\rangle.
\eeqs
Notice that $2\Re\langle Au,u\rangle=\langle (A+A^\ast)u,u\rangle$, with $A^\ast$ the formal adjoint of $A$, and $A+A^\ast$ is an operator with symbol in $SG^{p-1,-1}$, hence with positive order with respect to $\xi$. This implies that the desired energy estimate is not straightforward and in order to obtain it, we need to transform the Cauchy problem \eqref{CP} into an equivalent one of the form
\beqs
\label{CP2}
\begin{cases}
P_\lambda u_{\lambda} =f_\lambda\\
u_{\lambda}(0,x)=g_\lambda,
\end{cases}
\eeqs
where $P_\lambda=D_t-iA_\lambda$ and $\Re A_\lambda(t,x,\xi)\geq 0$; then we apply Theorem \ref{tha1} to obtain the estimate from below $$\Re\langle A_\lambda v,v\rangle\geq -c||v||_0^2$$ for $v \in \mathcal{S}(\R)$ and for some positive constant $c$. This, computing as in \eqref{56}, will give an $L^2$ energy estimate for the solution $u_\lambda$ of the Cauchy problem \eqref{CP2}. The operator $P_{\lambda}$ will be the result of $p-1$ conjugations of $P$ with operators of the form $e^{\lambda_{p-k}(x,D_x)}, k=1,\ldots, p-1$, namely:
\begin{equation}
\label{Plambda}
(iP)_\lambda:=(e^{\lambda_{1}(x,D_x)})^{-1}\cdots(e^{\lambda_{p-2}(x,D_x)})^{-1}(e^{\lambda_{p-1}(x,D_x)})^{-1}(iP)e^{\lambda_{p-1}(x,D_x)}e^{\lambda_{p-2}(x,D_x)}\ldots e^{\lambda_{1}(x,D_x)}.
\end{equation}
Here and in the following we shall denote by $e^{\pm \lambda_{p-k}(x,D_x)}, k=1,\ldots, p-1$, the operators with symbols $e^{\pm \lambda_{p-k}(x,\xi)}$ and the functions $\lambda_{p-k}$ will be chosen such that:
\begin{itemize}
\item $\lambda_{p-k}(x,\xi)$ are real valued, $1\leq k\leq p-1$;
\item $e^{\lambda_{p-1}(x,\xi)}\in SG^{0, M_{p-1}}$ for some $M_{p-1}>0$ and $e^{\lambda_{p-k}(x,\xi)}\in SG^{0,0}$ for $2\leq k\leq p-1;$
\item the operator $e^{\lambda_{p-k}(x,D_x)}$ is invertible for every $1\leq k\leq p-1$ and the principal part of $(e^{\lambda_{p-k}(x,D_x)})^{-1}$ is $e^{-\lambda_{p-k}(x,D_x)}$;
\item the operator $$A_\lambda:= (e^{\lambda_{1}(x,D_x)})^{-1}\cdots(e^{\lambda_{p-2}(x,D_x)})^{-1}(e^{\lambda_{p-1}(x,D_x)})^{-1}(iA)e^{\lambda_{p-1}(x,D_x)}e^{\lambda_{p-2}(x,D_x)}\ldots e^{\lambda_{1}(x,D_x)}$$
is such that $\Re\langle A_\lambda v,v\rangle\geq-c||v||_0^2$ $\forall v(t,\cdot)\in\mathcal S(\R)$. \end{itemize}
After the transformation of the problem \eqref{CP} into \eqref{CP2} with $P_{\lambda}$ defined by \eqref{Plambda} and $f_{\lambda}$ and $g_{\lambda}$ given by
\begin{equation}
\label{fgdef}
f_{\lambda}=(e^{\lambda_{1}(x,D_x)})^{-1}\cdots(e^{\lambda_{p-1}(x,D_x)})^{-1}f, \quad g_{\lambda}=(e^{\lambda_{1}(x,D_x)})^{-1}\cdots(e^{\lambda_{p-1}(x,D_x)})^{-1}g,
\end{equation}
 we will obtain an energy estimate in $L^2(\R)$ for the new variable
\begin{equation}\label{vdef}
u_{\lambda}(t,x)= (e^{\lambda_{1}(x,D_x)})^{-1}\cdots(e^{\lambda_{p-2}(x,D_x)})^{-1}(e^{\lambda_{p-1}(x,D_x)})^{-1}u(t,x)
\end{equation}
which will yield to an estimate of the form \eqref{E} for the solution $u$ of \eqref{CP}. 
\bigskip 

Let us now define the functions $\lambda_j$. We set
\beqs
\label{26'}
\lambda_{p-1}(x,\xi):=M_{p-1}\omega\left(\frac \xi h\right)\ds\int_0^x\frac1{\langle y\rangle}dy,
\eeqs
and for $2\leq k\leq p-1$
\beqs
\label{26}
\lambda_{p-k}(x,\xi):=M_{p-k}\omega\left(\frac \xi h\right) \langle\xi\rangle_h^{-k+1} \int_0^x
\langle y\rangle^{-\frac{p-k}{p-1}}\psi\left(
\frac{\langle y\rangle}{\langle\xi\rangle_h^{p-1}}\right)dy
\,,
\eeqs
where $M_{p-1}, M_{p-2},\ldots, M_1$ are positive constants to be chosen later on, $\omega \in C^\infty(\R)$ is such that
\beqs\label{omegaaaaa}
&&\omega(\xi)=
\begin{cases}
0 & |\xi|\leq 1\\
{\rm sgn}(\partial_\xi a_p(t,\xi))& |\xi|\geq R
\end{cases}
\eeqs
for some $R>1$, and $\psi\in C^\infty_0(\R)$ is such that $0\leq\psi(y)\leq 1$ $\forall y\in\R$, $\psi(y)=1$ for $|y|\leq\frac 12$, $\psi(y)=0$ for $ |y|\geq1.$ Notice that assumption \eqref{assAC2} ensures the existence of $R>0$ such that for every fixed $\xi$ with $|\xi|>R$ the sign of the function $\partial_\xi a_p(t,\xi)$ remains constant for every $t\in [0,T]$, then $\omega$ is well defined and does not depend on $t$.

Definition \eqref{26} is in part inspired by \cite{AB, ABZ}; more precisely, the symbols $\lambda_{p-k}$ in \eqref{26} are exactly the same as in \cite{AB}, while the symbol $\lambda_{p-1}$ in \eqref{26'} is new: it can be considered only in the framework of the $SG$ calculus, where symbols with polynomial growth in $x$ can be handled. The setting we are using allows to construct a transformation with a "stronger" $\lambda_{p-1}$ with respect to \cite{AB} still remaining in (weighted) Sobolev spaces.

\begin{Lemma}
\label{lemma1}
The function $\lambda_{p-1}$ defined by \eqref{26'} satisfies the following estimates:
\beqs\label{7}
&&|\lambda_{p-1}(x,\xi)|\leq M_{p-1}(1+\ln\px)
\\
\label{8}
&&|\partial_\xi^\alpha\partial_x^\beta \lambda_{p-1}(x,\xi)|\leq M_{p-1} C_{\alpha,\beta}\langle x\rangle^{-\beta}
\langle\xi\rangle_h^{-\alpha}\quad \alpha\geq 0,\beta\geq 1,
\\
\label{800}
&&|\partial_\xi^\alpha \lambda_{p-1}(x,\xi)|\leq M_{p-1} C'_{\alpha,R}\pxi_h^{-\alpha}\left(1+\ln\px\chi_{E_{h,R}}(\xi)\right),\quad \alpha\geq 1,
\eeqs
with positive constants $C,C_{\alpha,\beta},C'_{\alpha,R}$, and where $\chi_{E_{h,R}}$ is the characteristic function of the set $E_{h,R}=\{\xi\in\R\vert\ h\leq|\xi|\leq hR\}$.
\end{Lemma}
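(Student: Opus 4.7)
The plan is to carry out the estimates by a direct differentiation of the explicit formula
$$\lambda_{p-1}(x,\xi)=M_{p-1}\,\omega(\xi/h)\,I(x),\qquad I(x):=\int_0^x \frac{dy}{\langle y\rangle},$$
relying on two elementary facts that I would record first: (a) the primitive $I$ satisfies $|I(x)|\leq C(1+\ln\px)$ with $\partial_x I(x)=\px^{-1}$ and $|\partial_x^{\beta}I(x)|=|\partial_x^{\beta-1}\px^{-1}|\leq C_\beta\px^{-\beta}$ for $\beta\geq 1$; (b) by \eqref{omegaaaaa}, $\omega$ is smooth and locally constant outside $\{1\leq |\eta|\leq R\}$, so $\omega^{(\alpha)}$ is supported in this set for every $\alpha\geq 1$; consequently $\partial_\xi^{\alpha}\omega(\xi/h)=h^{-\alpha}\omega^{(\alpha)}(\xi/h)$ is supported in $E_{h,R}=\{h\leq|\xi|\leq hR\}$, and on this set $\pxi_h\leq h\sqrt{1+R^2}$, giving $|\partial_\xi^{\alpha}\omega(\xi/h)|\leq C_{\alpha,R}\,\pxi_h^{-\alpha}\chi_{E_{h,R}}(\xi)$ for $\alpha\geq 1$. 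For $\alpha=0$ one has trivially $|\omega(\xi/h)|\leq 1$.

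Estimate \eqref{7} is then immediate from (a) and $|\omega(\xi/h)|\leq 1$. For \eqref{8}, with $\beta\geq 1$, the formula factorizes as
$$\partial_\xi^{\alpha}\partial_x^{\beta}\lambda_{p-1}(x,\xi)=M_{p-1}\,\bigl(\partial_\xi^{\alpha}\omega(\xi/h)\bigr)\,\partial_x^{\beta-1}\bigl(\px^{-1}\bigr),$$
so I would combine the bound $|\partial_x^{\beta-1}\px^{-1}|\leq C_\beta\px^{-\beta}$ with the $\xi$-estimate from (b) (using the remark that for $\alpha=0$ the factor $\pxi_h^{-\alpha}=1$, and for $\alpha\geq 1$ the support restriction yields $\pxi_h^{-\alpha}$ on $E_{h,R}$, and zero elsewhere, hence in all cases a bound by $C_{\alpha,\beta}\,\pxi_h^{-\alpha}\px^{-\beta}$).

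Finally, for \eqref{800} (the pure $\xi$-derivative case, $\alpha\geq 1$, $\beta=0$), the differentiation hits only $\omega(\xi/h)$:
$$\partial_\xi^{\alpha}\lambda_{p-1}(x,\xi)=M_{p-1}\,\bigl(\partial_\xi^{\alpha}\omega(\xi/h)\bigr)\,I(x).$$
Applying (b) together with (a), the right hand side is bounded by $M_{p-1}C_{\alpha,R}\pxi_h^{-\alpha}\chi_{E_{h,R}}(\xi)(1+\ln\px)$, which is exactly $M_{p-1}C'_{\alpha,R}\pxi_h^{-\alpha}(1+\ln\px\,\chi_{E_{h,R}}(\xi))$ after absorbing the constant and noting that on the support of $\chi_{E_{h,R}}$ the two expressions agree, while off its support the left side vanishes and the right side is still nonnegative.

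I do not expect a real obstacle here; the only subtle point is book-keeping the support of $\omega^{(\alpha)}$ so that the logarithmic factor in $x$ is present only on the frequency shell $E_{h,R}$. This localization is precisely what yields the characteristic function in \eqref{800} and the pure $\pxi_h^{-\alpha}$ decay (without any logarithmic factor) in \eqref{8}.
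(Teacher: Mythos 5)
Your argument is correct and follows essentially the same route as the paper: differentiate the product $\omega(\xi/h)\,I(x)$ directly, use that $\omega^{(\alpha)}$ for $\alpha\geq 1$ is supported where $h\leq|\xi|\leq hR$, and convert $h^{-\alpha}$ into $\pxi_h^{-\alpha}$ on that shell, which is exactly how the logarithm appears only through $\chi_{E_{h,R}}$ in \eqref{800} and not at all in \eqref{8}. The only cosmetic point is that for \eqref{7} one should compute the primitive explicitly, $|I(x)|=\ln\left(|x|+\sqrt{1+x^2}\right)\leq\ln(2\px)\leq 1+\ln\px$, so the bound holds with constant exactly $1$ (as the statement requires) rather than an unspecified $C$; this is precisely what the paper does.
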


\begin{proof} 
A simple explicit computation of the integral in \eqref{26'} gives
\beqs\label{this}
|\lambda_{p-1}(x,\xi)|\leq M_{p-1}\log(2\px)< M_{p-1}(1+\ln\px).
\eeqs
For $\beta\geq 1$ we have
\beqs
|\partial_x^\beta\lambda_{p-1}(x,\xi)|=M_{p-1}\left\vert\omega\left(\frac \xi h\right)\partial_x^{\beta-1}\px^{-1}\right\vert
\label{two}
\leq M_{p-1}C_\beta\px^{-\beta},
\eeqs 
and  for $\alpha,\beta\geq 1$: 
\beqs\nonumber
|\partial_\xi^\alpha\partial_x^\beta\lambda_{p-1}(x,\xi)|=M_{p-1}\left\vert\omega^{(\alpha)}\left(\frac \xi h\right)h^{-\alpha}\partial_x^{\beta-1}\px^{-1}\right\vert
&\leq &M_{p-1}C_{\alpha,\beta} h^{-\alpha}\px^{-\beta}
\\
\label{dermist}
&\leq &M_{p-1}C_{\alpha,\beta}\pxi_h^{-\alpha}\px^{-\beta}.
\eeqs
Finally, for $\alpha\geq 1$:
\beqs\nonumber
|\partial_\xi^\alpha\lambda_{p-1}(x,\xi)|&=&M_{p-1}\left\vert\omega^{(\alpha)}\left(\frac \xi h\right)h^{-\alpha}\int_0^x\frac1{\langle y\rangle} dy\right\vert
\\
\nonumber
&\leq& M_{p-1}C_\alpha h^{-\alpha}\ln\px\chi_{E_{h,R}}(\xi)
\\
\label{one}
&\leq& M_{p-1}C_{\alpha,R} \pxi_h^{-\alpha}\ln\px\chi_{E_{h,R}}(\xi),
\eeqs
since $h^{-1}\leq\langle R\rangle\pxi_h^{-1}$ on $E_{h,R}$.

\end{proof}

\begin{Lemma}\label{lemma1bis}
Let $\lambda_{p-k}, k=2,\ldots, p-1$ be defined by \eqref{26}. Then for every $\alpha, \beta\in\N$, there exists a constant $C_{k,\alpha,\beta}>0$ such that
\beqs\label{40}
|\partial_\xi^\alpha\partial_x^\beta\lambda_{p-k}(x,\xi)|&\leq&
C_{k,\alpha,\beta}M_{p-k}\langle x\rangle^{\frac{k-1}{p-1}-\beta}
\langle\xi\rangle_h^{-\alpha-k+1}\chi_{\xi}(x)
\\
\nonumber
&\leq& C_{k,\alpha,\beta}M_{p-k}\langle x\rangle^{-\beta}
\langle\xi\rangle_h^{-\alpha},
\eeqs
where $\chi_{\xi}(x)$ denotes the characteristic function of the set $\left\{x\in\R\ \vert\ \langle x\rangle\leq \langle\xi\rangle_h^{p-1}\right\}$.
In particular, we have that $\lambda_{p-k} \in SG^{0,0}$ for $2 \leq k\leq p-1.$
\end{Lemma}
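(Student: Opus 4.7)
My approach is direct differentiation of the explicit formula \eqref{26}, splitting according to whether the $x$-derivative has order $\beta=0$ or $\beta\geq 1$, and carefully tracking the action of $\partial_\xi$ on the three $\xi$-dependent factors $\omega(\xi/h)$, $\langle\xi\rangle_h^{-k+1}$, and $\psi(\langle y\rangle/\langle\xi\rangle_h^{p-1})$. The key structural fact is that the cutoff $\psi$ forces the integrand in \eqref{26} to vanish as soon as $\langle y\rangle\geq \langle\xi\rangle_h^{p-1}$, which is the source of the characteristic function $\chi_\xi(x)$ appearing in \eqref{40}.

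For $\beta\geq 1$, the fundamental theorem of calculus gives
$$\partial_x\lambda_{p-k}(x,\xi)=M_{p-k}\,\omega(\xi/h)\,\langle\xi\rangle_h^{-k+1}\,\langle x\rangle^{-\frac{p-k}{p-1}}\psi\!\left(\frac{\langle x\rangle}{\langle\xi\rangle_h^{p-1}}\right),$$
which is already supported in $\{\langle x\rangle\leq \langle\xi\rangle_h^{p-1}\}$. Subsequent $x$-derivatives distribute by Leibniz either on $\langle x\rangle^{-\frac{p-k}{p-1}-j}$, gaining a factor $\langle x\rangle^{-1}$ each time, or on $\psi(\langle x\rangle/\langle\xi\rangle_h^{p-1})$, which on the support of its derivatives (where $\langle x\rangle\sim\langle\xi\rangle_h^{p-1}$) produces a factor $\langle\xi\rangle_h^{-(p-1)}\langle x\rangle \lesssim \langle x\rangle^{-1}\cdot \langle x\rangle^{2-p}$; in either case the net effect per derivative is at worst $\langle x\rangle^{-1}$. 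This yields $|\partial_x^\beta\lambda_{p-k}|\lesssim M_{p-k}\langle\xi\rangle_h^{-k+1}\langle x\rangle^{\frac{k-1}{p-1}-\beta}\chi_\xi(x)$. The case $\beta=0$ is treated separately by splitting $|x|\leq\langle\xi\rangle_h^{p-1}$ (where one can bound $\int_0^x\langle y\rangle^{-\frac{p-k}{p-1}}dy\lesssim \langle x\rangle^{\frac{k-1}{p-1}}$) against $|x|>\langle\xi\rangle_h^{p-1}$ (where the integral is saturated at the value $\lesssim\langle\xi\rangle_h^{k-1}$); this gives the first bound of \eqref{40} up to a term that is absorbed into the second inequality.

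For $\alpha\geq 1$, I would use the estimates $|\partial_\xi^j\omega(\xi/h)|\leq C_j h^{-j}\lesssim\langle\xi\rangle_h^{-j}$ on $\mathrm{supp}\,\omega\subset\{|\xi|\geq h\}$, the standard bound $|\partial_\xi^j\langle\xi\rangle_h^{-k+1}|\lesssim\langle\xi\rangle_h^{-k+1-j}$, and the chain rule applied to $\psi$:
$$\partial_\xi\psi\!\left(\frac{\langle y\rangle}{\langle\xi\rangle_h^{p-1}}\right)=-\psi'\!\left(\frac{\langle y\rangle}{\langle\xi\rangle_h^{p-1}}\right)\,\frac{(p-1)\,\xi\,\langle y\rangle}{\langle\xi\rangle_h^{p+1}},$$
which, on the support of $\psi'$ (where $\langle y\rangle\sim\langle\xi\rangle_h^{p-1}$), is bounded by $C\langle\xi\rangle_h^{-1}$. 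Iterating, each $\partial_\xi$ produces a $\langle\xi\rangle_h^{-1}$ factor regardless of which factor it lands on. Combining with the $\beta$-analysis via Leibniz, and absorbing the $y$-independent bounds either into the integral (if $\beta=0$) or into the pointwise estimate at $y=x$ (if $\beta\geq 1$), one obtains the first inequality in \eqref{40}. The second inequality then follows at once, since on $\mathrm{supp}\,\chi_\xi$ we have $\langle x\rangle^{\frac{k-1}{p-1}}\leq\langle\xi\rangle_h^{k-1}$.

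The main technical obstacle I anticipate is the bookkeeping for mixed derivatives with both $\alpha,\beta\geq 1$, where $\partial_x$ and $\partial_\xi$ can jointly hit the factor $\psi(\langle x\rangle/\langle\xi\rangle_h^{p-1})$ produced after the first $\partial_x$: one must verify that every derivative of $\psi$ is supported in the narrow band $\langle x\rangle\sim\langle\xi\rangle_h^{p-1}$ so that the $\langle\xi\rangle_h^{-(p-1)}$ it brings out can always be traded for a factor of $\langle x\rangle^{-1}$. The final claim $\lambda_{p-k}\in SG^{0,0}$ is an immediate consequence of the second (cruder) inequality in \eqref{40}.
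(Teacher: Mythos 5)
Your argument is correct in substance and is essentially the intended one: the paper does not prove this lemma itself but refers to \cite{ABZ}, Lemma 2.1, where exactly this direct differentiation of \eqref{26} is carried out --- the first $\partial_x$ removes the integral by the fundamental theorem of calculus, the cutoff $\psi$ then produces the factor $\chi_\xi(x)$, and afterwards each $\partial_x$ costs $\langle x\rangle^{-1}$ and each $\partial_\xi$ costs $\langle\xi\rangle_h^{-1}$, using that all derivatives of $\psi$ are supported where $\langle x\rangle$ is comparable to $\langle\xi\rangle_h^{p-1}$. Two remarks. First, your treatment of $\beta=0$ is the right one: for $\langle x\rangle>\langle\xi\rangle_h^{p-1}$ the integral in \eqref{26} saturates at a nonzero value bounded by $C\langle\xi\rangle_h^{k-1}$, so the first inequality in \eqref{40} cannot hold literally with the factor $\chi_\xi(x)$ when $\beta=0$; only the second, cruder bound survives in that region, and that is all that is needed in the sequel, since the $\chi_\xi$-localized estimate is only ever exploited for expressions containing at least one $x$-derivative of $\lambda_{p-k}$ (as in Lemma \ref{lemma2bis}, where the product $\partial_\xi\lambda_{p-k}\,D_x\lambda_{p-k}$ inherits the support from the factor $D_x\lambda_{p-k}$). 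Second, two small slips in your write-up, neither of which affects the conclusion: the inequality $h^{-j}\leq C\langle\xi\rangle_h^{-j}$ is false on all of $\supp\omega$ and must be invoked only on the support of the derivatives of $\omega(\cdot/h)$, which is contained in $\{h\leq|\xi|\leq Rh\}$, with a constant depending on $R$ (this is the same observation the paper makes after \eqref{800}), and that is indeed the only place you use it; and the intermediate bound ``$\langle\xi\rangle_h^{-(p-1)}\langle x\rangle\leq C\langle x\rangle^{-1}\langle x\rangle^{2-p}$'' in your $\beta\geq1$ discussion is garbled, though the conclusion you draw from it --- a derivative falling on $\psi$ can be traded for a factor $\langle x\rangle^{-1}$ on $\supp\psi'$, where $\langle x\rangle$ and $\langle\xi\rangle_h^{p-1}$ are comparable --- is the correct and relevant one.
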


\begin{proof} See \cite[Lemma 2.1]{ABZ}.
\end{proof}

From the estimates proved in Lemma \ref{lemma1} and \ref{lemma1bis} we obtain, by simply applying the Fa\`a di Bruno formula, the following estimates for the symbols $e^{\pm\lambda_{p-k}(x,\xi)}$. We leave the details of the proof to the reader.

\begin{Lemma}
\label{Lemma2.5}
Let $\lambda_{p-k}, k=1,\ldots, p-1$ be defined by \eqref{26'} and \eqref{26}. Then 
\beqs
\label{lambdasenzader}
|e^{\pm \lambda_{p-1}(x,\xi)}|&\leq& K\px^{M_{p-1}},
\\
\label{derlambdaxi}
|\partial_{\xi}^{\alpha}e^{\pm \lambda_{p-1}(x,\xi)}| &\leq& C_{\alpha}[1+\ln\px \chi_{E_{h,R}}(\xi)]^{\alpha}  \pxi^{-\alpha}_he^{\pm \lambda_{p-1}(x,\xi)},\ \alpha\geq 1,
\\
\label{derlambdax}
|\partial_{x}^{\beta}e^{\pm \lambda_{p-1}(x,\xi)}| &\leq &C_{\beta}\px^{-\beta}e^{\pm \lambda_{p-1}(x,\xi)},\ \beta\geq 1,
\\
\label{dermistelambdaxi}
|\partial_{\xi}^{\alpha}\partial_x^\beta e^{\pm \lambda_{p-1}(x,\xi)}| &\leq& C_{\alpha,\beta}
\left[1+\ln\px\chi_{E_{h,R}}(\xi)\right]^\alpha\px^{-\beta}\pxi^{-\alpha}_h e^{\pm \lambda_{p-1}(x,\xi)},
\qquad \alpha,\beta\geq 1,\\ 
\label{derlambdap-k}
|\partial_{\xi}^{\alpha}\partial_x^\beta e^{\pm \lambda_{p-k}(x,\xi)}| &\leq& C_{\alpha,\beta,k}
\px^{-\beta}\pxi^{-\alpha}_h e^{\pm \lambda_{p-k}(x,\xi)}, \qquad 2\leq k\leq p-1, \alpha,\beta \in \N,
\eeqs
for some positive constants $K, C_\alpha, C_\beta, C_{\alpha,\beta}, C_{\alpha,\beta,k}.$
\end{Lemma}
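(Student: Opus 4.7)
The plan is to apply the Faà di Bruno formula for the composition $e^{\pm\lambda_{p-k}(x,\xi)}$ and then plug in the pointwise bounds on $\lambda_{p-k}$ and its derivatives supplied by Lemma \ref{lemma1} and Lemma \ref{lemma1bis}. Recall that for a smooth function $\lambda$ and any $\gamma\in\N^{2}$ one has
$$\partial^{\gamma}e^{\pm\lambda}=e^{\pm\lambda}\sum(\pm 1)^{|\mathbf{k}|}c_{\mathbf{k}}\prod_{i}(\partial^{\gamma_{i}}\lambda)^{k_{i}},$$
where the sum runs over partitions $\sum_{i}k_{i}\gamma_{i}=\gamma$ with $\gamma_{i}\neq 0$, and $|\mathbf{k}|=\sum_{i}k_{i}$ satisfies $|\mathbf{k}|\leq|\gamma|$. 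Thus every derivative of $e^{\pm\lambda_{p-k}}$ factors as $e^{\pm\lambda_{p-k}}$ times a finite sum of products of derivatives of $\lambda_{p-k}$ of total order equal to the order of differentiation.

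The first estimate \eqref{lambdasenzader} is immediate from \eqref{7}, which yields $|e^{\pm\lambda_{p-1}(x,\xi)}|\leq e^{M_{p-1}}\langle x\rangle^{M_{p-1}}$. For \eqref{derlambdaxi}, I would apply the Faà di Bruno formula in $\xi$ alone and use \eqref{800}: each factor $\partial_{\xi}^{\alpha_{i}}\lambda_{p-1}$ contributes a bound of the form $M_{p-1}C_{\alpha_{i},R}'\langle\xi\rangle_{h}^{-\alpha_{i}}(1+\ln\langle x\rangle\chi_{E_{h,R}}(\xi))$, so a product indexed by a partition $\sum_{i}k_{i}\alpha_{i}=\alpha$ is bounded by a constant times $\langle\xi\rangle_{h}^{-\alpha}(1+\ln\langle x\rangle\chi_{E_{h,R}}(\xi))^{|\mathbf{k}|}$, and the maximum of $|\mathbf{k}|$ over admissible partitions is $\alpha$ (reached when all $\alpha_{i}=1$). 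The pure $x$-derivative bound \eqref{derlambdax} is analogous: since $\beta\geq 1$ forces every $\beta_{i}\geq 1$, estimate \eqref{8} from Lemma \ref{lemma1} gives each factor a bound $M_{p-1}C_{\beta_{i}}\langle x\rangle^{-\beta_{i}}$, and the total product yields $\langle x\rangle^{-\beta}$ with no logarithm since no $\xi$-derivative has been taken. For the mixed bound \eqref{dermistelambdaxi} I would apply the multivariate Faà di Bruno formula: each mixed factor $\partial_{\xi}^{\alpha_{i}}\partial_{x}^{\beta_{i}}\lambda_{p-1}$ with $\beta_{i}\geq 1$ is bounded by \eqref{8}, each pure $\xi$-factor by \eqref{800}, each pure $x$-factor by \eqref{8}, and the resulting decay collects to $\langle x\rangle^{-\beta}\langle\xi\rangle_{h}^{-\alpha}$, with the logarithmic factor appearing only from pure $\xi$-derivatives, whose total number is again at most $\alpha$.

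For \eqref{derlambdap-k} with $2\leq k\leq p-1$, the same Faà di Bruno argument applies, now using \eqref{40} from Lemma \ref{lemma1bis}, which already contains the clean decay $\langle x\rangle^{-\beta_{i}}\langle\xi\rangle_{h}^{-\alpha_{i}}$ with no logarithmic term, so that multiplying the factors of a partition with $\sum_{i}k_{i}(\alpha_{i}+\beta_{i})=\alpha+\beta$ directly produces the stated bound. The only step that demands a bit of bookkeeping, and is thus the main technical obstacle, is the control of the logarithmic factor in \eqref{derlambdaxi}–\eqref{dermistelambdaxi}: one must verify that $(1+\ln\langle x\rangle\chi_{E_{h,R}}(\xi))^{|\mathbf{k}|}$ is dominated by $(1+\ln\langle x\rangle\chi_{E_{h,R}}(\xi))^{\alpha}$, which follows from $|\mathbf{k}|\leq\alpha$ and the fact that the factor is $\geq 1$. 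Modulo this, the proof is a direct consequence of Lemmas \ref{lemma1} and \ref{lemma1bis}.
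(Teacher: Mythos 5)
Your proof is correct and follows exactly the route the paper intends: the paper itself only says the lemma follows from Lemmas \ref{lemma1} and \ref{lemma1bis} ``by simply applying the Fa\`a di Bruno formula'' and leaves the details to the reader, and your bookkeeping (in particular the observation that the number of pure $\xi$-factors, hence the power of the logarithm, is at most $\alpha$, and that $(1+\ln\px\,\chi_{E_{h,R}}(\xi))^{|\mathbf{k}|}\leq(1+\ln\px\,\chi_{E_{h,R}}(\xi))^{\alpha}$ since the base is $\geq 1$) supplies precisely those details.
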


The next two results state the invertibility of the operators $e^{\lambda_{p-k}(x,D_x)}$ for $k=1,\ldots, p-1.$ 

\begin{Lemma}
\label{lemma2}
Let $\lambda_{p-1}(x,\xi)$ be defined by \eqref{26'}. Then there exists $h_1\geq 1$ such that for $h\geq h_1$
the operator $e^{\lambda_{p-1}(x,D)}$ is invertible and
\beqs
\label{18}
(e^{\lambda_{p-1}(x,D_x)})^{-1}=e^{-\lambda_{p-1}(x,D_x)}(I+R_{p-1})
\eeqs
where $I$ is the identity operator and $R_{p-1}$ has
principal symbol given by
\beqsn
r_{p-1,-1}(x,\xi)\in SG^{-1,0}.
\eeqsn
\end{Lemma}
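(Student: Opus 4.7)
The plan is to construct the inverse of $e^{\lambda_{p-1}(x,D_x)}$ by using $e^{-\lambda_{p-1}(x,D_x)}$ as a parametrix and correcting by a Neumann series in $\mathcal L(L^2(\R))$, the latter becoming convergent once $h$ is taken sufficiently large.

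First, I would apply Proposition \ref{composition} to $e^{-\lambda_{p-1}(x,\xi)}$ and $e^{\lambda_{p-1}(x,\xi)}$. The leading term ($\alpha=0$) in the asymptotic expansion of the composed symbol is identically $1$; for $\alpha\geq 1$ the term $\tfrac{1}{\alpha!}\partial_\xi^\alpha e^{-\lambda_{p-1}}\,D_x^\alpha e^{\lambda_{p-1}}$ is estimated via Lemma \ref{Lemma2.5}, using that $\partial_\xi^\alpha\lambda_{p-1}$ is supported in $E_{h,R}=\{h\leq|\xi|\leq hR\}$ for $\alpha\geq 1$ and that $D_x^\alpha e^{\lambda_{p-1}}$ inherits the cut-off $\omega(\xi/h)$: the factors $e^{\pm\lambda_{p-1}}$ cancel, and each such term lies in $SG^{-\alpha,0}$. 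Summing the asymptotic series modulo a smoothing remainder yields
\begin{equation*}
e^{-\lambda_{p-1}(x,D_x)}\,e^{\lambda_{p-1}(x,D_x)} = I + T_{p-1},
\end{equation*}
with $T_{p-1}=t_{p-1}(x,D_x)$, $t_{p-1}\in SG^{-1,0}$, and principal part $t_{p-1,-1}(x,\xi)=i\,\partial_\xi\lambda_{p-1}\,\partial_x\lambda_{p-1}$.

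The decisive step is to show $\|T_{p-1}\|_{\mathcal L(L^2)}<1$ for $h\geq h_1$ large. Tracking the estimates of Lemma \ref{Lemma2.5} through the expansion, every $\xi$-seminorm of $t_{p-1}$ picks up an extra factor $h^{-1}\leq \langle R\rangle\langle\xi\rangle_h^{-1}$ on the relevant support (from differentiating $\omega(\xi/h)$), while $x$-derivatives give $\langle x\rangle^{-\beta}$ decay and the logarithmic factor is controlled by $\langle x\rangle^{-1}\ln\langle x\rangle\leq C$. Hence any finite family of Calder\'on--Vaillancourt seminorms of $t_{p-1}$ is $O(h^{-1})$, and the standard $L^2$-continuity for $SG^{0,0}$ operators gives $\|T_{p-1}\|_{\mathcal L(L^2)}\leq Ch^{-1}<1$ once $h\geq h_1$. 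Consequently $I+T_{p-1}$ is invertible by Neumann series, producing a left inverse $(I+T_{p-1})^{-1}e^{-\lambda_{p-1}(x,D_x)}$ of $e^{\lambda_{p-1}(x,D_x)}$.

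Repeating the argument with the factors swapped yields $e^{\lambda_{p-1}(x,D_x)}\,e^{-\lambda_{p-1}(x,D_x)}=I+T_{p-1}'$ with $T_{p-1}'$ of the same type, hence a right inverse $e^{-\lambda_{p-1}(x,D_x)}(I+T_{p-1}')^{-1}$; left and right inverses of a bounded operator coincide. Setting $R_{p-1}:=(I+T_{p-1}')^{-1}-I=-T_{p-1}'(I+T_{p-1}')^{-1}$, the SG calculus shows that $R_{p-1}$ has symbol in $SG^{-1,0}$, with principal part $r_{p-1,-1}\in SG^{-1,0}$, establishing \eqref{18}. The main obstacle, and the only place the requirement $h\geq h_1$ is used, is the smallness of $\|T_{p-1}\|_{\mathcal L(L^2)}$: the symbol $t_{p-1}$ lies only in $SG^{-1,0}$, so its $L^2$-norm is a priori merely bounded, and smallness is recovered solely from the localization in $\xi$ to $E_{h,R}$ supplied by the cut-off $\omega$ in \eqref{omegaaaaa}.
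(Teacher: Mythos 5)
Your proposal is correct and follows essentially the same route as the paper: use $e^{-\lambda_{p-1}}$ as a parametrix, expand the composition via the SG calculus, observe that all seminorms of the remainder are $O(h^{-1})$ because each term carries a $\xi$-derivative of $\lambda_{p-1}$ (hence the factor $h^{-1}$ from $\omega(\xi/h)$, with the logarithm absorbed by $\langle x\rangle^{-1}\ln\langle x\rangle\leq C$), and invert by a Neumann series for $h\geq h_1$, treating both orders of composition to get a two-sided inverse with correction of principal symbol $\partial_\xi\lambda_{p-1}D_x\lambda_{p-1}\in SG^{-1,0}$.
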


\begin{proof}
By Proposition \ref{composition} it follows that $$
e^{\lambda_{p-1}(x,D_x)}e^{-\lambda_{p-1}(x,D_x)}=
I-r_{p-1,-1}(x,D) + r_{p-1,-2}(x,D),$$ where 
$$r_{p-1,-1}(x,\xi)= \partial_\xi \lambda_{p-1}(x,\xi)D_x\lambda_{p-1}(x,\xi)$$ and $$r_{p-1,-2}(x,\xi) \sim \sum_{m\geq2}\frac{1}{m!}\partial_\xi^me^{\lambda_{p-1}(x,\xi)}
D_x^m e^{-\lambda_{p-1}(x,\xi)}.$$
By \eqref{8} and \eqref{800}
\beqsn
r_{p-1,-1}(x,\xi)\in SG^{-1,-1+\varepsilon} \quad \textit{for every} \quad \varepsilon >0
\eeqsn
and
\beqsn
r_{p-1,-2}(x,\xi) \in SG^{-2,-2+\varepsilon} \quad \textit{for every} \quad \varepsilon >0.
\eeqsn
More precisely,
\beqs\nonumber
|\partial_\xi^\alpha D_x^\beta r_{p-1,-1}(x,\xi)|&&\leq \sum_{\alpha_1+\alpha_2=\alpha}\left(c_{\alpha_1,\alpha_2,\beta}\vert\partial_\xi^{\alpha_1+1} \lambda_{p-1}\vert\cdot\vert\partial_\xi^{\alpha_2} D_x^{\beta+1}\lambda_{p-1}\vert\right.+
\\
\nonumber
&&\left. +\sum_{\beta_1+\beta_2=\beta,\ \beta_1\neq 0}c_{\alpha_1,\alpha_2,\beta_1,\beta_2}\vert\partial_\xi^{\alpha_1+1} D_x^{\beta_1}\lambda_{p-1}\vert\cdot\vert\partial_\xi^{\alpha_2} D_x^{\beta_2+1}\lambda_{p-1}\vert\right)
\\
\nonumber
&&\leq C_{\alpha,\beta,R}\pxi_h^{-\alpha}\px^{-\beta}\pxi_h^{-1}(\ln\px+1)\px^{-1}
\\
&&\leq 2C_{\alpha,\beta,R}\cdot h^{-1}\pxi_h^{-\alpha}\px^{-\beta}, \qquad\forall \alpha,\beta\geq 0.
\eeqs
Setting $r_{p-1}(x,\xi):=r_{p-1,-1}(x,\xi) - r_{p-1,-2}(x,\xi),$
we have  also
\beqs\label{22}
|\partial_\xi^\alpha D_x^\beta r_{p-1}(x,\xi)|\leq C'_{\alpha,\beta,R}\cdot h^{-1}\pxi_h^{-\alpha}\px^{-\beta}, \qquad\forall \alpha,\beta\geq 0
\eeqs
and for some $C'_{\alpha,\beta,R}>0$; this means that for $h$ large enough, operator $I-R_{p-1}$ is invertible by Neumann series and $\sum_{n=0}^{+\infty}R_{p-1}^n$ is the inverse operator. Similar considerations hold for $e^{-\lambda_{p-1}(x,D_x)} e^{\lambda_{p-1}(x,D_x)}$; thus $e^{-\lambda_{p-1}}\sum_{n=0}^{+\infty}R_{p-1}^n$ is a left and right inverse for $e^{\lambda_{p-1}(x,D_x)}$. The lemma is then proved.
\end{proof}

\begin{Lemma}\label{lemma2bis}
Let $\lambda_{p-k}(x,\xi), k=2,\ldots, p-1$ be defined by \eqref{26}. Then for every $k$ there exists $h_k\geq 1$ such that for $h\geq h_k$ the operator $e^{\lambda_{p-k}(x,D)}$ is invertible and
\beqs
\label{18bis}
(e^{\lambda_{p-k}(x,D_x)})^{-1}=e^{-\lambda_{p-k}(x,D_x)}(I+R_{p-k})
\eeqs
where $I$ is the identity operator and $R_{p-k}$ has
principal symbol
\beqsn
r_{p-k,-k}(x,\xi)= \partial_{\xi}\lambda_{p-k}(x,\xi)D_{x}\lambda_{p-k}(x,\xi) \in SG^{-k,-\frac{p-k}{p-1}}.
\eeqsn
\end{Lemma}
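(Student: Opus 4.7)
The approach mirrors the proof of Lemma \ref{lemma2}. I would apply Proposition \ref{composition} to $p(x,\xi)=e^{\lambda_{p-k}(x,\xi)}$ and $q(x,\xi)=e^{-\lambda_{p-k}(x,\xi)}$: the zeroth-order term of the asymptotic expansion is $pq=1$, while the $m=1$ term, computed using $\partial_\xi e^{\lambda_{p-k}}=(\partial_\xi\lambda_{p-k})\,e^{\lambda_{p-k}}$ and $D_x e^{-\lambda_{p-k}}=-(D_x\lambda_{p-k})\,e^{-\lambda_{p-k}}$, equals $-\,\partial_\xi\lambda_{p-k}\,D_x\lambda_{p-k}$. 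Hence
\[
e^{\lambda_{p-k}(x,D_x)}\,e^{-\lambda_{p-k}(x,D_x)}=I-r_{p-k,-k}(x,D_x)+\tilde r_{p-k}(x,D_x),
\]
where $r_{p-k,-k}(x,\xi)=\partial_\xi\lambda_{p-k}(x,\xi)\,D_x\lambda_{p-k}(x,\xi)$ and $\tilde r_{p-k}$ collects the contributions from $m\geq 2$.

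The first task is to check the SG-class of $r_{p-k,-k}$. From Lemma \ref{lemma1bis} one has $|\partial_\xi\lambda_{p-k}|\leq C\,M_{p-k}\,\langle x\rangle^{(k-1)/(p-1)}\,\langle\xi\rangle_h^{-k}\,\chi_\xi(x)$ and $|D_x\lambda_{p-k}|\leq C\,M_{p-k}\,\langle x\rangle^{-(p-k)/(p-1)}\,\langle\xi\rangle_h^{-k+1}\,\chi_\xi(x)$. Using the support condition $\langle x\rangle\leq\langle\xi\rangle_h^{p-1}$ encoded in $\chi_\xi$ to trade surplus $x$-powers for $\xi$-powers, one verifies that the product and all its $(\alpha,\beta)$-derivatives satisfy the $SG^{-k,-(p-k)/(p-1)}$ bounds; an analogous but stronger computation places $\tilde r_{p-k}$ in strictly lower-order classes.

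The decisive step is to extract the $h^{-1}$-smallness of the total remainder $r_{p-k}:=r_{p-k,-k}-\tilde r_{p-k}$. Since $k\geq 2$ and $\langle\xi\rangle_h\geq h$, the factor $\langle\xi\rangle_h^{-2k+1}$ carried by $r_{p-k,-k}$ produces at least $h^{-3}$; parallel bookkeeping for the derivatives gives
\[
|\partial_\xi^\alpha D_x^\beta r_{p-k}(x,\xi)|\leq C'_{\alpha,\beta,k}\,h^{-1}\,\langle\xi\rangle_h^{-\alpha}\,\langle x\rangle^{-\beta},\qquad\alpha,\beta\geq 0,
\]
so that $r_{p-k}(x,D_x)$ is bounded on $L^2(\R)$ (in fact on every $H_{s_1,s_2}(\R)$) with norm $\mathcal{O}(h^{-1})$. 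For $h\geq h_k$ large enough, the Neumann inversion of $I-r_{p-k}(x,D_x)$ yields
\[
(e^{\lambda_{p-k}(x,D_x)})^{-1}=e^{-\lambda_{p-k}(x,D_x)}\,(I+R_{p-k}),\qquad R_{p-k}:=\sum_{n\geq 1} r_{p-k}(x,D_x)^n,
\]
as a right inverse; a symmetric computation starting from $e^{-\lambda_{p-k}(x,D_x)}\,e^{\lambda_{p-k}(x,D_x)}$ gives a left inverse, and the two must coincide. The principal symbol of $R_{p-k}$ is the one of $r_{p-k,-k}$, namely $\partial_\xi\lambda_{p-k}\,D_x\lambda_{p-k}\in SG^{-k,-(p-k)/(p-1)}$.

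The main delicate point is the verification of the sharp SG-orders of $r_{p-k,-k}$: the crude bound $\lambda_{p-k}\in SG^{0,0}$ only yields $r_{p-k,-k}\in SG^{-1,-1}$, and one genuinely needs the cutoff $\chi_\xi$ from Lemma \ref{lemma1bis} and the resulting trade between $x$- and $\xi$-powers to reach the claimed indices $-k$ and $-(p-k)/(p-1)$; everything else is a routine adaptation of the argument already used for $\lambda_{p-1}$.
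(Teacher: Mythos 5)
Your proposal is correct and follows essentially the same route as the paper: the paper's proof declares the Neumann-series construction ``completely analogous'' to Lemma \ref{lemma2} and then, exactly as you do, uses the estimates of Lemma \ref{lemma1bis} together with the support condition $\px\leq\pxi_h^{p-1}$ in $\chi_\xi$ to get $|r_{p-k,-k}|\leq C_kM_{p-k}^2\px^{2\frac{k-1}{p-1}-1}\pxi_h^{-2k+1}\chi_\xi(x)\leq C_kM_{p-k}^2\px^{-\frac{p-k}{p-1}}\pxi_h^{-k}\leq C_kM_{p-k}^2h^{-1}$, with the derivatives handled in the same way. Your remark that $\pxi_h^{-2k+1}$ yields ``at least $h^{-3}$'' is slightly imprecise (part of that decay is spent absorbing the residual $x$-growth when $2k-1>p$), but after the trade one still keeps $\pxi_h^{-k}\leq h^{-1}$, which is all the Neumann inversion requires, so the argument stands.
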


\begin{proof}
The construction of the inverse is completely analogous to the one of Lemma \ref{lemma2}. Moreover, by \eqref{40}
we have that
\beqs
\nonumber
 |r_{p-k,-k}(x,\xi)|=|\partial_\xi \lambda_{p-k}(x,\xi)D_x \lambda_{p-k}(x,\xi)|&\leq&
C_{k}M_{p-k}^2\langle x\rangle^{2\frac{k-1}{p-1}-1}
\langle\xi\rangle_h^{-2k+1}\chi_{\xi}(x) 
\\
\label{questa}
&\leq& C_{k}M_{p-k}^2 \px^{-\frac{p-k}{p-1}}\pxi_h^{-k}
\\
\nonumber
&\leq& C_{k}M_{p-k}^2 h^{-1}
\eeqs
since on the support of $\chi_{\xi}(x)$ we have $\langle x\rangle\leq \langle\xi\rangle_h^{p-1}$.
The derivatives of $r_{p-k,-k}$ can be estimated similarly. Thus, for $h$ large enough we obtain \eqref{18bis}, and from \eqref{questa} we have $r_{p-k,-k} \in SG^{-k,-\frac{p-k}{p-1}}.$
\end{proof}


\section{The proof of Theorem \ref{main}}

The proof of Theorem \ref{main} needs some preparation. As announced in Section 2, we shall reduce the Cauchy problem \eqref{CP} to the problem \eqref{CP2}, where the operator $P_{\lambda}$ is defined by \eqref{Plambda} and the functions $f_{\lambda}, g_{\lambda}$ and $u_{\lambda}$ are given respectively by \eqref{fgdef} and \eqref{vdef}. We first prove the following result.

\begin{Prop} \label{propreduction}
Let $P_{\lambda}$ be defined by \eqref{Plambda}. Then we have:
$$(iP)_{\lambda}= \partial_t + ia_p(t,D_x) + \sum_{\ell=1}^{p-1}Q_{p-\ell}(t,x,D_x) + r_{0}(t,x,D_x),$$
for some operators $Q_{p-\ell}$ with symbols $Q_{p-\ell}(t,x,\xi) \in SG^{p-\ell,0},$ satisfying 
$$\langle Q_{p-\ell}v,v \rangle \geq 0 \qquad v \in \mathcal{S}(\R), \, 1\leq\ell \leq p-1$$ and for some operator
$r_0$ with symbol in $S^0.$
\end{Prop}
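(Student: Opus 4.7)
The plan is to carry out the conjugation \eqref{Plambda} in $p-1$ successive stages, starting from the innermost factor $e^{\lambda_{p-1}(x,D_x)}$. Since each $\lambda_{p-k}$ is time-independent, conjugation commutes with $\partial_t$, so $(iP)_\lambda = \partial_t + (iA)_\lambda$ with $iA = ia_p(t,D_x) + \sum_{j=0}^{p-1}ia_j(t,x,D_x)$, and the whole task reduces to analyzing the symbol of
$A^{(k)} := (e^{\lambda_{p-k}(x,D_x)})^{-1}\, A^{(k-1)}\, (e^{\lambda_{p-k}(x,D_x)})$, $k=1,\ldots,p-1$, with $A^{(0)}:=iA$. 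At each stage I would combine Proposition \ref{composition} with Lemmas \ref{lemma2} and \ref{lemma2bis} (which give $(e^{\lambda_{p-k}})^{-1} = e^{-\lambda_{p-k}}(I+R_{p-k})$ with $R_{p-k}$ of strictly negative $\xi$-order) and Lemma \ref{Lemma2.5} (pointwise estimates for $e^{\pm\lambda_{p-k}}$ and their derivatives).

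The key subprincipal computation exploits $\partial_x e^{\pm\lambda_{p-k}} = \pm(\partial_x\lambda_{p-k})e^{\pm\lambda_{p-k}}$ and the crucial observation that $a_p$ is independent of $x$, so $e^{-\lambda_{p-k}}\# a_p = e^{-\lambda_{p-k}}\cdot a_p$ identically. Expanding $(e^{-\lambda_{p-k}}\cdot a_p)\# e^{\lambda_{p-k}}$ via Proposition \ref{composition}, the leading subprincipal is $-i(\partial_\xi a_p)(\partial_x\lambda_{p-k})$ plus a correction of the form $i(\partial_\xi\lambda_{p-k})\,a_p\,(\partial_x\lambda_{p-k})$; for $k=1$ this correction lies in $S^0$, since its $\xi$-support is confined to $E_{h,R}$ by the compact support of $\omega'$, while for $k\geq 2$ it is of the same $SG$-order as the leading term but carries an extra $M_{p-k}$ that can be controlled by keeping $M_{p-k}$ moderate. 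Multiplying by $i$, the real contribution is $(\partial_\xi a_p)(\partial_x\lambda_{p-k})$, which by \eqref{assAC2} and \eqref{omegaaaaa} is nonnegative for $|\xi|\geq hR$ and, by Lemmas \ref{lemma1}, \ref{lemma1bis}, lies in $SG^{p-1,-1}$ when $k=1$ and in $SG^{p-k,-(p-k)/(p-1)}$ when $k\geq 2$, exactly the class containing $\Im a_{p-k}$ by \eqref{assAC3}. Choosing $M_{p-k}$ large enough, the total real symbol at order $p-k$ becomes nonnegative modulo $S^0$, which defines the operator $Q_{p-k}\in SG^{p-k,0}$.

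I would then argue by induction on $k$: after $k$ stages $A^{(k)} = ia_p + \sum_{\ell=1}^{k}Q_{p-\ell} + B^{(k)} + r_0^{(k)}$ with $B^{(k)}$ in $SG^{p-k-1,-(p-k-1)/(p-1)}$ and $r_0^{(k)}\in S^0$. In the step $k\to k+1$ there are four contributions to track: first, the already-constructed $Q_{p-\ell}$ for $\ell\leq k$ are perturbed only by strictly lower-order terms, because each $\xi$-derivative of $\lambda_{p-k-1}$ lowers the $\xi$-order and each $x$-derivative produces a factor $\langle x\rangle^{-1}$; second, the conjugation of $ia_p$ yields the new subprincipal $(\partial_\xi a_p)(\partial_x\lambda_{p-k-1})$ as above; third, the leading part of $B^{(k)}$, which contains $ia_{p-k-1}$, combines with this new subprincipal to produce $Q_{p-k-1}$; finally, all remaining contributions are absorbed into $B^{(k+1)}$ or $r_0^{(k+1)}$. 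At $k=p-1$, $B^{(p-1)}$ has order $SG^{0,0}\subset S^0$ by \eqref{inclusion}, which completes the decomposition.

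The main obstacle, and the truly new ingredient with respect to \cite{AB, ABZ}, is the first stage $k=1$: since $e^{\lambda_{p-1}}\in SG^{0,M_{p-1}}$ grows polynomially in $x$, one cannot treat it as an $L^2$-bounded operator. Lemma \ref{Lemma2.5} is what makes everything go through: although $\partial_\xi e^{\pm\lambda_{p-1}}$ picks up an extra $\log\langle x\rangle$ factor, estimate \eqref{derlambdaxi} shows that this factor is supported on the bounded set $E_{h,R}$ in $\xi$ and so only contributes to $r_0$; off $E_{h,R}$, each $\xi$-derivative genuinely lowers the $\xi$-order by one and each $x$-derivative genuinely lowers the $x$-order by one, so $(\partial_\xi a_p)(\partial_x\lambda_{p-1})$ truly lies in $SG^{p-1,-1}$ and all remainders in the composition stay in the correct $SG$-classes. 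This is what permits $Q_{p-1}$ to be globally nonnegative --- not merely on $\{\langle x\rangle\leq\langle\xi\rangle_h^{p-1}\}$ as in \cite{ABZ} --- and is ultimately responsible for the absence of loss of derivatives in $s_1$ in the energy estimate \eqref{E}.
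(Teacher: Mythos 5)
Your roadmap coincides with the paper's: conjugate stage by stage starting from $e^{\lambda_{p-1}}$, expand via Proposition \ref{composition} and Lemmas \ref{lemma2}, \ref{lemma2bis}, \ref{Lemma2.5}, isolate the subprincipal term $(\partial_\xi a_p)(\partial_x\lambda_{p-k})$ to absorb $-\Im a_{p-k}$ by taking $M_{p-k}$ large, and induct. But there is a genuine gap at the decisive point: you stop at ``the total real symbol at order $p-k$ becomes nonnegative modulo $S^0$, which defines the operator $Q_{p-k}$''. Nonnegativity of the real part of a symbol of positive order does \emph{not} give the operator inequality $\Re\langle Q_{p-k}v,v\rangle\geq 0$ demanded by the Proposition; one needs the sharp G{\aa}rding inequality. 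The paper applies Theorem \ref{tha1} at every stage --- to $i\tilde a_{p-1}+\partial_\xi a_p\partial_x\lambda_{p-1}$ in the first step, and to $ia_{p-n,n-1}+\partial_\xi a_p\partial_x\lambda_{p-n}+C'$ in the later ones, the constant $C'$ compensating the region where $\psi\left(\px/\pxi_h^{p-1}\right)$ vanishes --- and it is exactly for this that Theorem \ref{tha1} was proved with the refined remainder \eqref{restosharpgarding}: the G{\aa}rding remainder $\tilde r\in SG^{m_1-1,m_2}$ must be reabsorbed into the lower-order symbols $a_{j,n}\in SG^{j,-j/(p-1)}$ so that the induction can proceed, while only the $S^0$ piece is sent to $r_0$. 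Without this step the positivity statement $\langle Q_{p-\ell}v,v\rangle\geq 0$ is simply unproved.

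A second, smaller but real, problem is your treatment of the correction $ia_p\,\partial_\xi\lambda_{p-k}\,D_x\lambda_{p-k}$ (coming from $R_{p-k}a_p$) for $k\geq 2$: it lies in $SG^{p-k,-(p-k)/(p-1)}$, i.e.\ precisely at the level where positivity must be produced, and it is \emph{quadratic} in $M_{p-k}$, so your proposal to ``keep $M_{p-k}$ moderate'' conflicts with the need to take $M_{p-k}$ large to dominate the terms inherited from the previous stages; a priori the quadratic term could destroy positivity altogether. The paper resolves this not by a size argument but by an exact cancellation: the same symbol appears with opposite sign in the expansion \eqref{sviluppone} of the conjugation of $ia_p$, see \eqref{ap-2,1} and the displayed expansion of $(iP)_2$ in the proof of Lemma \ref{prlevp-2}, so the level-$(p-k)$ symbol that survives is $ia_{p-k,k-1}+\partial_\xi a_p\partial_x\lambda_{p-k}$, which depends on $M_{p-k}$ only linearly and only through the favorable term; this is what makes the successive choices of the constants $M_{p-1},\ldots,M_1$ non-circular. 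Your handling of the first stage ($k=1$), where the analogous correction is supported in $E_{h,R}$ in $\xi$ and hence harmless, is correct and consistent with the paper.
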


Proposition \ref{propreduction} will be proved in $p-1$ steps each of them corresponding to a conjugation with an operator of the form $e^{\lambda_{p-k}}, k=1,\ldots, p-1.$ 
We know by \eqref{40} and \eqref{lambdasenzader} that the operators $e^{\pm \lambda_{p-1}(x,D_x)}$ have order $(0,M_{p-1})$, while the other operators $e^{\pm \lambda_{p-k}(x,D_x)}$ have order $(0,0)$ for $k=2,\ldots, p-1.$ In the proof of the energy estimate \eqref{E} the first conjugation will play an essential role since it determines the loss of $H_{s_1,s_2}$ regularity; the others all work similarly to each other. This is the reason why we shall organize the proof of Proposition \ref{propreduction} as follows. We present in detail the first two transformations and then we argue by induction. Each of these steps corresponds to a different lemma.
Before this we give a preliminary result which states for an operator with symbol $a(x,\xi)\in SG^{m_1,0}$ the form of the composed operator
$e^{-\lambda_i(x,D_x)}a(x,D_x)e^{\lambda_i(x,D_x)}, i=1,\ldots, p-1$, with $\lambda_i$ defined as in the previous section.

%
\begin{Lemma}\label{eh} Let $a \in SG^{m_1,0}$ and let $\lambda_{i}, i=1,\ldots, p-1$ be defined by \eqref{26'}, \eqref{26}. Then the symbol of $e^{-\lambda_i(x,D_x)}a(x,D_x)e^{\lambda_i(x,D_x)}, i=1,\ldots, p-1$ is given by:
\beqs\nonumber
\left(e^{-\lambda_i}ae^{\lambda_i}\right)(x,\xi)&=&a+\ds\sum_{\alpha=1}^{m_1-1}\frac1{\alpha!}\partial_\xi^\alpha a\cdot e^{-\lambda_i}\cdot D_x^\alpha e^{\lambda_i}
\\
\label{sviluppone}
&+&\ds\sum_{\gamma=1}^{m_1-1}\frac1{\gamma!}\partial_\xi^\gamma e^{-\lambda_i}
D_x^\gamma (ae^{\lambda_i})+\ds\sum_{\gamma=1}^{m_1-2}\sum_{\alpha=1}^{m_1-1}\frac1{\alpha!\gamma!}\partial_\xi^\gamma e^{-\lambda_i}D_x^{\gamma}(\partial_\xi^\alpha a \cdot D_x^{\alpha} e^{\lambda_i})+r_0
\eeqs
with $r_0\in SG^{0,0}$. Moreover the symbol
$$r(x,\xi)=\ds\sum_{\gamma=1}^{m_1-1}\frac1{\gamma!}\partial_\xi^\gamma e^{-\lambda_i}
D_x^\gamma (ae^{\lambda_i})+\ds\sum_{\gamma=1}^{m_1-2}\sum_{\alpha=1}^{m_1-1}\frac1{\alpha!\gamma!}\partial_\xi^\gamma e^{-\lambda_i}D_x^{\gamma}(\partial_\xi^\alpha a \cdot D_x^{\alpha} e^{\lambda_i}) \in SG^{m_1-1,0}.$$
\end{Lemma}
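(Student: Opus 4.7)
The plan is to compute the symbol of the triple composition $e^{-\lambda_i}(x,D)\,a(x,D)\,e^{\lambda_i}(x,D)$ by applying Proposition \ref{composition} twice. Setting $b = a\,\#\,e^{\lambda_i}$, the two asymptotic expansions combine formally to
\[
e^{-\lambda_i}\,\#\,a\,\#\,e^{\lambda_i} \;\sim\; \sum_{\gamma,\alpha\geq 0}\frac{1}{\gamma!\,\alpha!}\,\partial_\xi^\gamma e^{-\lambda_i}\cdot D_x^\gamma\!\left(\partial_\xi^\alpha a \cdot D_x^\alpha e^{\lambda_i}\right).
\]
The $(\gamma,\alpha)=(0,0)$ contribution is exactly $e^{-\lambda_i}\cdot a\cdot e^{\lambda_i}=a$, producing the leading term of \eqref{sviluppone}; the three strata $\gamma=0,\alpha\geq 1$; $\gamma\geq1,\alpha=0$; and $\gamma,\alpha\geq1$ give, respectively, the three sums in \eqref{sviluppone}. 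The goal is then to show that every summand left out by the prescribed truncations, together with the asymptotic remainders produced by the two invocations of Proposition \ref{composition}, lies in $SG^{0,0}$, and that the retained $\gamma\geq 1$ part $r$ belongs to $SG^{m_1-1,0}$.

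The crucial ingredient is the cancellation between the exponential factors. By Fa\`a di Bruno one writes $\partial_\xi^\gamma e^{-\lambda_i}=e^{-\lambda_i}G_\gamma$ and $D_x^\delta e^{\lambda_i}=e^{\lambda_i}F_\delta$, where $G_\gamma$ and $F_\delta$ are polynomials in derivatives of $\lambda_i$ of respective total $\xi$-order $\gamma$ and total $x$-order $\delta$. Thus $\partial_\xi^\gamma e^{-\lambda_i}\cdot D_x^\delta e^{\lambda_i}=G_\gamma F_\delta$ is exponential-free. From Lemmas \ref{lemma1} and \ref{lemma1bis} one reads off $G_\gamma\in SG^{-\gamma,\epsilon}$ for any $\epsilon>0$ (the $\epsilon$ absorbs the factor $\ln\langle x\rangle\,\chi_{E_{h,R}}$ that appears only when $i=p-1$; for $i\leq p-2$ one may take $\epsilon=0$) and $F_\delta\in SG^{0,-\delta}$. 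Expanding $D_x^\gamma(\partial_\xi^\alpha a\cdot D_x^\alpha e^{\lambda_i})$ by Leibniz and recalling that $D_x^\beta \partial_\xi^\alpha a\in SG^{m_1-\alpha,-\beta}$, every summand of the double sum belongs to $SG^{m_1-\alpha-\gamma,\,-\alpha-\gamma+\epsilon}$.

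This single order count drives both truncations at once. Any summand with $\alpha+\gamma\geq m_1$ lies in $SG^{0,-m_1+\epsilon}\subset SG^{0,0}$, so the excluded ranges ($\gamma=0,\alpha\geq m_1$; $\alpha=0,\gamma\geq m_1$; and $\gamma\geq m_1-1$ with $\alpha\geq 1$) all feed into $r_0$. The retained summands, all of which satisfy $\alpha+\gamma\geq 1$, have $\xi$-order at most $m_1-1$ and $x$-order at most $0$, so their sum $r$ is in $SG^{m_1-1,0}$, which is the second claim. The asymptotic remainders from the two uses of Proposition \ref{composition} are treated similarly: truncating each expansion at a common order $N\geq m_1$ and repeating the cancellation analysis, the corresponding symbol remainders again fall in $SG^{0,0}$ and contribute to $r_0$.

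The step I expect to be the main obstacle is the tracking of the logarithmic factor carried by $\partial_\xi^\alpha e^{\pm\lambda_{p-1}}$ through Lemmas \ref{lemma1} and \ref{Lemma2.5}. I will handle it by absorbing $\ln\langle x\rangle$ into an arbitrarily small power $\langle x\rangle^\epsilon$; since each summand with $\gamma\geq 1$ picks up a genuine $\langle x\rangle^{-1}$ decay from $F_{\gamma-\beta+\alpha}$ whenever $\gamma-\beta+\alpha\geq 1$, this decay dominates the logarithmic growth and preserves the $SG$-memberships claimed above. Beyond this subtlety the argument is routine bookkeeping with Leibniz and the explicit estimates of Lemmas \ref{lemma1}, \ref{lemma1bis}, and \ref{Lemma2.5}.
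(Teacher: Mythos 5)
Your proposal is correct and follows the same route as the paper, which simply invokes Proposition \ref{composition} (twice) and leaves the bookkeeping implicit; you have in effect written out the details (exponential cancellation via Fa\`a di Bruno, the order count $SG^{m_1-\alpha-\gamma,\,-\alpha-\gamma+\epsilon}$ per summand, absorption of the logarithm) that the paper's one-line proof takes for granted. The only loose point is the remark that truncating at ``a common order $N\geq m_1$'' suffices for the tail remainders: since $e^{\pm\lambda_{p-1}}$ has positive $x$-order $M_{p-1}$, you should take $N$ large enough (or arbitrary, using that the expansions are asymptotic) so that the crude remainder orders from Proposition \ref{composition} land in $SG^{0,0}$, with the intermediate explicit terms handled exactly by your cancellation count.
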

\begin{proof} The proof easily follows by Proposition \ref{composition}.
\end{proof}

\begin{Rem} \label{compclas}
{\rm If we assume that $a \in S^{m_1}$ instead of $a \in SG^{m_1,0}$, then Lemma \ref{eh} still holds with $r \in S^{m_1-1}.$}
\end{Rem}
%
\begin{Lemma} \label{prlevp-1}
Let $h$ be as in Lemma \ref{lemma2} and consider, for $h\geq h_1,$ the operator $(iP)_1=(e^{\lambda_{p-1}(x,D_x)})^{-1}(iP)e^{\lambda_{p-1}(x,D_x)}$. There exist operators $Q_{p-1}(t,x,D_x)$, $a_{j,1}(t,x,D_x)$ and $r_{1}(t,x,D_x)$ with symbols $Q_{p-1}(t,x,\xi)\in SG^{p-1,-1}$, $a_{j,1}(t,x,\xi)\in SG^{j,-j/(p-1)}$, $1\leq j\leq p-2$ and $r_1(t,x,\xi)\in S^0$ such that
$$
(iP)_1=\partial_t+ia_p(t,D_x)+Q_{p-1}(t,x,D_x)+\ds\sum_{j=1}^{p-2}ia_{j,1}(t,x,D_x)+r_1(t,x,D_x),
$$ and $$
\Re\langle Q_{p-1}(t,x,D_x)v,v\rangle\geq 0,\ \forall v\in\mathcal S(\R).
$$
\end{Lemma}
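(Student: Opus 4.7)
The plan is to compute the full symbol of $(iP)_1$ via Lemma \ref{eh} applied to each summand $e^{-\lambda_{p-1}}(ia_k)\,e^{\lambda_{p-1}}$ for $k=0,\ldots,p$, then to isolate a single symbol of order $(p-1,-1)$ whose real part is nonnegative, and finally to invoke Theorem \ref{tha1} (sharp G{\aa}rding) to produce $Q_{p-1}$ with the claimed positivity property. Since $\lambda_{p-1}$ does not depend on $t$, the time derivative $\partial_t$ is unaffected by the conjugation, so $(iP)_1 = \partial_t + (e^{\lambda_{p-1}})^{-1}(iA)\,e^{\lambda_{p-1}}$ with $A=a_p+\sum_{j=0}^{p-1}a_j$. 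By Lemma \ref{lemma2}, $(e^{\lambda_{p-1}})^{-1}=e^{-\lambda_{p-1}}(I+R_{p-1})$ with $R_{p-1}$ of size $O(h^{-1})$ in $SG^{-1,0}$, so its contribution is strictly of lower order and will be harmlessly regrouped at the end.

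Applying Lemma \ref{eh} to $e^{-\lambda_{p-1}}(ia_p)e^{\lambda_{p-1}}$, the principal symbol is $ia_p$, and the leading correction is $i\,\partial_\xi a_p\cdot e^{-\lambda_{p-1}}D_x e^{\lambda_{p-1}} = i\,\partial_\xi a_p\cdot(-i\partial_x\lambda_{p-1}) = M_{p-1}\omega(\xi/h)\partial_\xi a_p(t,\xi)\langle x\rangle^{-1}$. By the choice of $\omega$ in \eqref{omegaaaaa} together with \eqref{assAC2}, this quantity equals $M_{p-1}|\partial_\xi a_p(t,\xi)|\langle x\rangle^{-1}\geq C_pM_{p-1}|\xi|^{p-1}\langle x\rangle^{-1}$ on $\{|\xi|\geq Rh\}$, which is real and nonnegative; on $\{|\xi|<Rh\}$ it is bounded in $\xi$ and lies in $S^0$. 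The only other contribution at order $(p-1,-1)$ comes from the conjugation of $ia_{p-1}$, which produces $ia_{p-1}$ plus strictly lower-order terms. Hence, modulo $S^0$ and symbols of order $\leq(p-2,\cdot)$, the $SG^{p-1,-1}$ part of the symbol of $(iA)_1$ is
\[
q_{p-1}(t,x,\xi):=ia_{p-1}(t,x,\xi)+M_{p-1}\omega(\xi/h)\,\partial_\xi a_p(t,\xi)\langle x\rangle^{-1}.
\]
By \eqref{assAC3} with $j=p-1$ we have $|\Im a_{p-1}|\leq C\langle x\rangle^{-1}\langle\xi\rangle^{p-1}$, so choosing $M_{p-1}$ large with respect to $C/C_p$ yields $\Re q_{p-1}(t,x,\xi)\geq 0$ on $\{|\xi|\geq Rh\}$, hence globally up to an $S^0$ error. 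Applying Theorem \ref{tha1} to $q_{p-1}$ decomposes it as $q_{p-1}(x,D)=Q_{p-1}(x,D)+\tilde r(x,D)+r_0(x,D)$ with $Q_{p-1}\in SG^{p-1,-1}$ satisfying $\Re\langle Q_{p-1}v,v\rangle\geq 0$, $\tilde r\in SG^{p-2,-1}$ and $r_0\in S^0$.

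For the remaining terms, I would show that each $e^{-\lambda_{p-1}}(ia_j)e^{\lambda_{p-1}}$ with $0\leq j\leq p-2$ gives $ia_j$ plus corrections that all lie in $SG^{j',-j'/(p-1)}$ for some $j'\leq j$: the bookkeeping follows from Lemma \ref{eh} combined with the estimates in Lemma \ref{Lemma2.5}, since each factor $\partial_\xi^\alpha D_x^\beta e^{\pm\lambda_{p-1}}$ gains $\langle x\rangle^{-\beta}$ together with a factor that is bounded in $\xi$ outside $E_{h,R}$ (the logarithmic growth in \eqref{800} is harmless because it is compactly supported in $\xi$, so any such contribution is pushed to $S^0$). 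Collecting everything, the lower-order corrections from all conjugations, the sharp-G{\aa}rding remainder $\tilde r$, and the contribution from $R_{p-1}$ are regrouped into operators $ia_{j,1}$ with $a_{j,1}\in SG^{j,-j/(p-1)}$ for $1\leq j\leq p-2$, while all pieces in $S^0$ (from the region $|\xi|<Rh$, from $E_{h,R}$, and from $r_0$) combine into $r_1$. The main obstacle is this last regrouping: one must check that every correction from Lemma \ref{eh} applied to $a_j\in SG^{j,-j/(p-1)}$ stays within the hierarchy $SG^{j',-j'/(p-1)}$, which ultimately reduces to the fact that the $x$-decay $\langle x\rangle^{-1}$ gained per $D_x$ is stronger than the $1/(p-1)$ step needed to pass from $SG^{j',-j'/(p-1)}$ to $SG^{j'-1,-(j'-1)/(p-1)}$.
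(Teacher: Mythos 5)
Your route is the paper's: expand each conjugated term via Lemma \ref{eh}, isolate $ia_{p-1}+\partial_\xi a_p\,\partial_x\lambda_{p-1}$ as the critical symbol of order $(p-1,-1)$, obtain nonnegativity of its real part for $|\xi|\geq hR$ from \eqref{assAC2}, \eqref{assAC3}, \eqref{omegaaaaa} by taking $M_{p-1}\gtrsim C/C_p$, apply Theorem \ref{tha1}, and regroup the corrections using that each $D_x$ falling on $e^{\pm\lambda_{p-1}}$ gains $\langle x\rangle^{-1}$, which beats the $1/(p-1)$ step of the hierarchy, while every term carrying a $\xi$-derivative of $\lambda_{p-1}$ is supported in $E_{h,R}$ and is pushed into $S^0$. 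All of that matches the paper's argument.

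The gap is your treatment of $R_{p-1}$. You declare its contribution ``strictly of lower order'' and postpone it, but with the class you quote, $R_{p-1}\in SG^{-1,0}$, the term $e^{-\lambda_{p-1}}R_{p-1}(ia_p)e^{\lambda_{p-1}}$ has symbol of order $(p-1,0)$: it sits exactly at the critical $\xi$-order $p-1$, with no $x$-decay and no sign information. It therefore cannot be absorbed into $\sum_{j\leq p-2}ia_{j,1}$ (wrong $\xi$-order) nor into $r_1\in S^0$, and it was not included in the symbol fed to Theorem \ref{tha1}; had it been included, the positivity estimate would break, since even the finer bound from the proof of Lemma \ref{lemma2} only gives decay like $\langle x\rangle^{-1}\ln\langle x\rangle$, which no choice of $M_{p-1}$ lets you dominate by $M_{p-1}|\partial_\xi a_p|\langle x\rangle^{-1}$. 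The missing ingredient is the same support observation you already use for the Lemma \ref{eh} remainders: every term in the expansion of the symbol of $R_{p-1}$ contains at least one factor $\partial_\xi^m\lambda_{p-1}$, $m\geq1$, hence by \eqref{800} is supported in $E_{h,R}$; consequently, modulo smoothing terms, $R_{p-1}a_j$ is compactly supported in $\xi$ and lands in $S^0$, and in particular does not perturb the symbol in the region $|\xi|\geq hR$ where positivity is proved. This is exactly what the paper encodes at the outset by writing $\tilde a_j=a_j+R_{p-1}a_{j+1}$ and then using that $\tilde a_j=a_j$ for $|\xi|>hR$ in the positivity computation. (A smaller point, at the paper's own level of rigor: since $\Re q_{p-1}\geq0$ is only established for $|\xi|\geq hR$, Theorem \ref{tha1} should be applied to the symbol corrected on the compact region $|\xi|\leq hR$, e.g. after adding a constant or a cut-off $S^0$ term, as the paper does explicitly in the later transformations.)
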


\begin{proof}
We first notice that, by Lemma \ref{lemma2} we have
\beqs\nonumber
(iP)_1(t,x,D_x)&=&\partial_t+\ds\sum_{j=0}^p(e^{\lambda_{p-1}(x,D_x)})^{-1}ia_je^{\lambda_{p-1}(x,D_x)}=
\\
\nonumber
&=&\partial_t+\ds\sum_{j=0}^pe^{-\lambda_{p-1}(x,D_x)}ia_je^{\lambda_{p-1}(x,D_x)}+\ds\sum_{j=0}^pe^{-\lambda_{p-1}(x,D_x)}iR_{p-1}a_je^{\lambda_{p-1}(x,D_x)}
\\
\nonumber
&=&\partial_t+e^{-\lambda_{p-1}(x,D_x)}ia_pe^{\lambda_{p-1}(x,D_x)}+
\ds\sum_{j=1}^{p-1}e^{-\lambda_{p-1}(x,D_x)}i(a_j+R_{p-1}a_{j+1})e^{\lambda_{p-1}(x,D_x)}+ t_0
\\
&=&\partial_t+e^{-\lambda_{p-1}(x,D_x)}ia_pe^{\lambda_{p-1}(x,D_x)}+
\ds\sum_{j=1}^{p-1}e^{-\lambda_{p-1}(x,D_x)}i\tilde a_je^{\lambda_{p-1}(x,D_x)}+ t_0
\eeqs
for some $t_0(t,x,D_x)$ of order $(0,0)$, and with new operators $\tilde a_j=a_j+R_{p-1} a_{j+1}$ having symbol $\tilde a_j(t,x,\xi)\in SG^{j,-j/(p-1)}$. We now apply 
formula \eqref{sviluppone}, observing that in the case $i=p-1$, the term $r$ vanishes for $|\xi| \geq hR$ since it is a sum of products with at least one $\xi$-derivative of $\lambda_{p-1}$ appearing in each factor, see \eqref{800}. Then in particular, the term $r$ has order $(0,0)$ since it is compactly supported in $\xi$ and with order $0$ in $x$. Hence we obtain for the related operators:
\begin{multline}
(iP)_1=\partial_t+ ia_p+\ds\sum_{\alpha=1}^{p-1}\frac1{\alpha!}\partial_\xi^\alpha ia_p\cdot e^{-\lambda_{p-1}}\cdot D_x^\alpha e^{\lambda_{p-1}}\\ +\sum_{j=1}^{p-1}\left(i\tilde a_j+\sum_{\alpha=1}^{j-1}\frac1{\alpha!}\partial_\xi^\alpha i\tilde a_j\cdot e^{-\lambda_{p-1}}\cdot D_x^\alpha e^{\lambda_{p-1}}\right)+t_0
\\ 
=\partial_t+ ia_p+\partial_\xi a_p\partial_x\lambda_{p-1}+i\tilde a_{p-1}+\ds\sum_{\alpha=2}^{p-1}\frac1{\alpha!}\partial_\xi^\alpha ia_p\cdot e^{-\lambda_{p-1}}\cdot D_x^\alpha e^{\lambda_{p-1}}+\ds\sum_{j=1}^{p-2}i\tilde a_j
\\
+ \sum_{j=1}^{p-1}\sum_{\alpha=1}^{j-1}\frac1{\alpha!}\partial_\xi^\alpha i\tilde a_j\cdot e^{-\lambda_{p-1}}\cdot D_x^\alpha e^{\lambda_{p-1}}+\tilde{t}_0
\label{preceden} \end{multline}
for a term $\tilde{t}_0$ with symbol in $SG^{0,0}$. 
Now we consider the real part of the terms of order $p-1$ with respect to $\xi$ in \eqref{preceden}, and use \eqref{assAC2}, \eqref{assAC3},   \eqref{omegaaaaa}, \eqref{two} for $|\xi|\geq hR$ to get:
\beqsn
\Re\left(i\tilde a_{p-1}+\partial_\xi a_p\partial_x\lambda_{p-1}\right)&=&-\Im \tilde a_{p-1}+\partial_\xi a_p\partial_x\lambda_{p-1}
\\
&=&-\Im a_{p-1}+M_{p-1}\vert\partial_\xi a_p(t,\xi)\vert\px^{-1}
\\
&\geq& \left(2^{-(p-1)/2}C_pM_{p-1}-C\right)\pxi_h^{p-1}\px^{-1},
\eeqsn
where we used that $\tilde a_j=a_j$ for $|\xi|>hR,$ and we used also the inequality: $\langle a\rangle_h\leq \sqrt 2|a|$ for $a\in\R,$ $|a|\geq h$. We thus obtain
\beqsn
\Re\left(i\tilde a_{p-1}+\partial_\xi a_p\partial_x\lambda_{1}\right)\geq 0
\eeqsn
if only we choose the constant $M_{p-1}$ great enough:
\begin{equation}\label{choice!}
M_{p-1}\geq 2^{(p-1)/2 }C/C_p.
\end{equation}
We can then apply Theorem \ref{tha1} to the operator $i\tilde a_{p-1}+\partial_\xi a_p\partial_x\lambda_{p-1}$ and obtain that there exist pseudo-differential operators $Q_{p-1}(t,x,D_x)$, $\tilde{R}_{p-2}(t,x,D_x)$ and $R_0$ with symbols $Q_{p-1}(t,x,\xi)\in SG^{p-1,-1}$, $ \tilde{R}_{p-2}(t,x,\xi) \in SG^{p-2,-1}$, $R_{0}(t,x,\xi)\in S^0$ such that 
$$\Re\langle Q_{p-1}(t,x,D_x)v,v\rangle\geq 0,\ \forall v\in\mathcal S(\R)$$ and
$$i\tilde a_{p-1}+\partial_\xi a_p\partial_x\lambda_{p-1}= Q_{p-1}+ \tilde{R}_{p-2} + R_0.$$
Finally we estimate the last three terms in the right-hand side of \eqref{preceden}. We observe that for $2 \leq \alpha \leq p-1$ we have
\begin{equation}\label{stimaresti}|\partial_\xi^\alpha a_p\cdot e^{-\lambda_{p-1}}\cdot D_x^\alpha e^{\lambda_{p-1}}| \leq C_p' \pxi_h^{p-\alpha}\px^{-1-\alpha} \leq C_p' \pxi_h^{p-2}\px^{-\frac{p-2}{p-1}}, \end{equation} since $\alpha+1 \geq 3 > (p-2)/(p-1).$ Similarly we can estimate the derivatives of the above symbol observing that the worst case occurs when $\xi$-derivatives of order $\gamma$ fall on the term $e^{-\lambda_{p-1}}$. By \eqref{dermistelambdaxi} this produces a term of type $(\ln\px)^{\gamma}$ in the estimates but since $$(\ln\px)^\gamma \px^{-1-\alpha} \leq (\ln \px)^\gamma \px^{-3} \leq C \px^{-\frac{p-2}{p-1}}$$ for every $\gamma >0,$ we conclude that $\partial_\xi^\alpha a_p\cdot e^{-\lambda_{p-1}}\cdot D_x^\alpha e^{\lambda_{p-1}} \in SG^{p-2, -(p-2)/(p-1)}.$ By similar arguments we obtain for $1\leq \alpha \leq j-1$ that $\partial_\xi^\alpha \tilde{a}_j\cdot e^{-\lambda_{p-1}}\cdot D_x^\alpha e^{\lambda_{p-1}} \in SG^{j-1, -(j-1)/(p-1)}.$ Hence, we can gather the last three terms in the right-hand side of \eqref{preceden} and the remainder term $\tilde{R}_{p-2}$ obtained from the application of Theorem \ref{tha1} and conclude that
\beqsn
(iP)_1=\partial_t+ia_p(t,D_x)+Q_{p-1}(t,x,D_x)+\ds\sum_{j=1}^{p-2}ia_{j,1}(t,x,D_x)+r_{1}(t,x,D)
\eeqsn
for some $a_{j,1} \in SG^{j,-j/(p-1)}$ and $r_1 \in S^0.$
Lemma \ref{prlevp-1} is proved.\end{proof}

To perform the second transformation, we need to compute the operator 
$$(iP)_2=(e^{\lambda_{p-2}(x,D_x)})^{-1}(iP)_1e^{\lambda_{p-2}(x,D_x)}.$$ By Lemma \ref{lemma2bis}, for $h\geq h_2$ there exists an operator $R_{p-2}$ with principal symbol 
$$r_{p-2,-2}(x,\xi)=\partial_\xi\lambda_{p-2}(x,\xi)D_x\lambda_{p-2}(x,\xi)\in SG^{-2,-(p-2)/(p-1)}$$ such that:
\[(iP)_2=e^{-\lambda_{p-2}(x,D_x)}(I+R_{p-2})(iP)_1e^{\lambda_{p-2}(x,D_x)}.\]
We have the following result.

\begin{Lemma} \label{prlevp-2}
Let $h_1,h_2$ be as in Lemmas \ref{lemma2} and \ref{lemma2bis} and let $h\geq \max\{h_1,h_2\}.$ Then there exist pseudo-differential operators $Q_{p-2}(t,x,D_x)$, $a_{j,2}(t,x,D_x)$ and $r_{2}(t,x,D_x)$ with symbols $Q_{p-2}(t,x,\xi)\in SG^{p-2,0}$, $a_{j,2}(t,x,\xi)\in SG^{j,-j/(p-1)}$ for $1\leq j\leq p-3$, $r_2(t,x,\xi)\in S^0$ such that:
$$(iP)_2=\partial_t+ia_p(t,D_x)+Q_{p-1}(t,x,D_x)+ Q_{p-2}(t,x,D_x)+\ds\sum_{j=1}^{p-3}ia_{j,2}(t,x,D_x)+r_2(t,x,D_x),$$
and 
$$\Re\langle Q_{p-2}(t,x,D_x)v,v\rangle\geq 0,\ \forall v\in\mathcal S(\R). $$
\end{Lemma}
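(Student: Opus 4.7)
The plan is to mimic the argument of Lemma \ref{prlevp-1} one level down in the $\xi$-hierarchy. First I would apply Lemma \ref{lemma2bis} to write
$$(iP)_2=e^{-\lambda_{p-2}(x,D_x)}(I+R_{p-2})(iP)_1\, e^{\lambda_{p-2}(x,D_x)},$$
substitute the expression of $(iP)_1$ furnished by Lemma \ref{prlevp-1}, and note that since $\lambda_{p-2}$ does not depend on $t$, the term $\partial_t$ passes through the conjugation unchanged. I would then conjugate each spatial operator among $ia_p$, $Q_{p-1}$, $ia_{j,1}$ (for $1\leq j\leq p-2$), and $r_1$ by $e^{\pm\lambda_{p-2}(x,D_x)}$, invoking Lemma \ref{eh} for the SG-symbols and Remark \ref{compclas} for $Q_{p-1}\in S^{p-1}$.

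The principal new contribution arises from the first correction in the conjugation of $ia_p$, which by \eqref{sviluppone} is $\partial_\xi a_p\cdot\partial_x\lambda_{p-2}$: by Lemmas \ref{lemma1bis} and \ref{Lemma2.5}, together with the cutoff $\psi$ in \eqref{26}, this symbol lies in $SG^{p-2,-(p-2)/(p-1)}$. The conjugation of $Q_{p-1}$ returns $Q_{p-1}$ plus remainders in $SG^{p-2,-1}\subset SG^{p-2,-(p-2)/(p-1)}$, and each $ia_{j,1}$ with $j\leq p-2$ becomes $ia_{j,1}$ plus symbols of strictly smaller $\xi$-order. Finally, the contribution from the factor $I+R_{p-2}$ is dominated by the principal symbol of $R_{p-2}$, namely $r_{p-2,-2}=\partial_\xi\lambda_{p-2}\,D_x\lambda_{p-2}\in SG^{-2,-(p-2)/(p-1)}$; composed with $ia_p$ it produces the real symbol $a_p\,\partial_\xi\lambda_{p-2}\,\partial_x\lambda_{p-2}$ of order $(p-2,-(p-2)/(p-1))$, while every other contribution coming from $I+R_{p-2}$ is genuinely lower order.

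Collecting the terms of order $p-2$ in $\xi$, the real part I need to render non-negative takes, up to self-adjoint operators of lower order, the shape
$$b_{p-2}(t,x,\xi)=-\Im a_{p-2,1}+\partial_\xi a_p\,\partial_x\lambda_{p-2}+a_p\,\partial_\xi\lambda_{p-2}\,\partial_x\lambda_{p-2}.$$
On the region $\{|\xi|\geq hR,\ \langle x\rangle\leq\tfrac12\langle\xi\rangle_h^{p-1}\}$, the definitions \eqref{26}, \eqref{omegaaaaa} and hypothesis \eqref{assAC2} yield
$$\partial_\xi a_p\,\partial_x\lambda_{p-2}\geq c\,C_p M_{p-2}\,\langle\xi\rangle_h^{p-2}\langle x\rangle^{-(p-2)/(p-1)},$$
which by \eqref{assAC3} and Lemma \ref{lemma1bis} majorizes the other two summands provided $M_{p-2}$ is taken large enough. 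Outside this region the cutoffs in $\lambda_{p-2}$ vanish and the leftover $-\Im a_{p-2,1}$ is uniformly bounded, hence absorbed into an $S^0$ symbol. Theorem \ref{tha1} then produces $Q_{p-2}\in SG^{p-2,-(p-2)/(p-1)}\subset SG^{p-2,0}$ with $\Re\langle Q_{p-2}v,v\rangle\geq 0$, plus a remainder of order $(p-3,-(p-2)/(p-1))$ and one in $S^0$.

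The main obstacle is the careful bookkeeping of all residuals: every term produced by Lemma \ref{eh}, by the action of $R_{p-2}$, and by the G\aa rding correction must be sorted into either an $a_{j,2}\in SG^{j,-j/(p-1)}$ with $1\leq j\leq p-3$ or into $r_2\in S^0$. The decisive point is that, by Lemmas \ref{lemma1bis} and \ref{Lemma2.5}, each extra $\partial_x$ landing on $\lambda_{p-2}$ (or on $e^{\pm\lambda_{p-2}}$) comes with a factor $\langle x\rangle^{-1/(p-1)}$ on the support of $\chi_\xi$, precisely balancing the loss in $\xi$-order needed to descend into the next SG-bucket. Once this accounting is organized, the stated decomposition of $(iP)_2$ follows.
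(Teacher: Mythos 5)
Your overall route is the paper's one (conjugate $(iP)_1$ by $e^{\lambda_{p-2}(x,D_x)}$ via Lemma \ref{lemma2bis}, isolate the level-$(p-2)$ symbol, make its real part bounded from below by choosing $M_{p-2}$, then apply Theorem \ref{tha1}), but there is a genuine gap in how you treat the contribution of $R_{p-2}$ applied to $ia_p$. You keep the real symbol $a_p\,\partial_\xi\lambda_{p-2}\,\partial_x\lambda_{p-2}$ inside $b_{p-2}$ and claim it is majorized by $\partial_\xi a_p\,\partial_x\lambda_{p-2}$ ``provided $M_{p-2}$ is taken large enough''. This cannot work: by \eqref{40} and $a_p\in SG^{p,0}$ one has, on the support of the cutoffs, $|a_p\,\partial_\xi\lambda_{p-2}\,\partial_x\lambda_{p-2}|\leq C M_{p-2}^{2}\pxi_h^{p-3}\px^{-(p-3)/(p-1)}\chi_\xi(x)\leq C M_{p-2}^{2}\pxi_h^{p-2}\px^{-(p-2)/(p-1)}$, so this term has exactly the same bi-order as the good term, its sign is not controlled (neither the sign of $a_p$ nor that of $\partial_\xi\lambda_{p-2}$ is prescribed), and it scales like $M_{p-2}^{2}$, whereas $\partial_\xi a_p\,\partial_x\lambda_{p-2}\sim C_pM_{p-2}\pxi_h^{p-2}\px^{-(p-2)/(p-1)}$ scales only like $M_{p-2}$. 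Enlarging $M_{p-2}$ makes the comparison worse, not better: near $\px\sim\tfrac12\pxi_h^{p-1}$ the ratio of the two terms is of size $M_{p-2}$, so the claimed absorption fails.

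The paper's proof avoids this through a cancellation that you have implicitly discarded as ``strictly smaller $\xi$-order'': the $\gamma=1$ term of the expansion \eqref{sviluppone} applied to $e^{-\lambda_{p-2}}(ia_p)e^{\lambda_{p-2}}$ is $-ia_p\,\partial_\xi\lambda_{p-2}\,D_x\lambda_{p-2}$, which has the same critical order $(p-2,-(p-2)/(p-1))$ and cancels, modulo genuinely lower-order symbols, the term $+ia_p\,\partial_\xi\lambda_{p-2}\,D_x\lambda_{p-2}$ produced by $R_{p-2}a_p$; this is precisely the point of formula \eqref{ap-2,1} and of the explicit term $-ia_p\partial_\xi\lambda_{p-2}D_x\lambda_{p-2}$ in the long expansion of $(iP)_2$. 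Consequently the symbol actually fed into Theorem \ref{tha1} is $ia_{p-2,1}+\partial_\xi a_p\,\partial_x\lambda_{p-2}+C'$, whose unfavourable part $-\Im a_{p-2,1}$ depends only on the already fixed $M_{p-1}$, and the choice $M_{p-2}\geq C(M_{p-1})2^{(p-1)/2}/C_p$ suffices. To repair your argument you must carry this conjugation term explicitly and exhibit the cancellation, rather than dominate it. The remaining bookkeeping in your proposal (passing $Q_{p-1}$ through the conjugation, sorting residuals into $SG^{j,-j/(p-1)}$ or $S^{0}$, adding a constant before applying sharp G\aa rding so that $Q_{p-2}\in SG^{p-2,0}$ rather than $SG^{p-2,-(p-2)/(p-1)}$) is consistent with the paper.
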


\begin{proof}
By Lemma \ref{prlevp-1} we have (omitting the notation $(t,x,D_x)$ ):
\beqs \label{iP2}
(iP)_2&=&\partial_t+e^{-\lambda_{p-2}}\left(ia_p+Q_{p-1}+\ds\sum_{j=1}^{p-2}ia_{j,1}+r_1 \right)e^{\lambda_{p-2}}\\
\nonumber
&&+e^{-\lambda_{p-2}}\left( iR_{p-2}a_p+R_{p-2}Q_{p-1}+\ds\sum_{j=1}^{p-2}iR_{p-2}a_{j,1}+R_{p-2}r_1 \right)e^{\lambda_{p-2}}.
\eeqs
where $a_{j,1}(t,x,D)$ have symbols in $SG^{j,-j/(p-1)},$ and $r_1$ has symbol in $S^0.$
Now we observe that $iR_{p-2}a_p \in SG^{p-2, -(p-2)/(p-1)},$ $R_{p-2}Q_{p-1} \in SG^{p-3, -(p-3)/(p-1)}$ and $R_{p-2}ia_{j,1} \in SG^{j-2,-(j-2)/(p-1)}$ for every $j=1,\ldots, p-2.$ Then we can write \eqref{iP2} as follows:
$$ (iP)_2=\partial_t+e^{-\lambda_{p-2}(x,D_x)}\left(ia_p+Q_{p-1}+\ds\sum_{j=1}^{p-2}i\tilde a_{j,1}\right)e^{\lambda_{p-2}(x,D_x)}+t_0$$
for some $t_0$ with symbol in $S^0$ and with new symbols $\tilde a_{j,1}\in SG^{j,-j/(p-1)}$. Here we have considered the composition $e^{\lambda_{p-2}}(R_{p-2}r_1)e^{\lambda_{p-2}}$ using Remark \ref{compclas}.
Here it is important to underline that the operator $i\tilde a_{p-2,1}$ has symbol:
\begin{equation} \label{ap-2,1}
i\tilde a_{p-2,1}(t,x,\xi)=i a_{p-2,1}(t,x,\xi)+ia_p(t,\xi)\partial_\xi\lambda_{p-2}(t,x,\xi)D_x\lambda_{p-2}(t,x,\xi),
\end{equation}
where the first term in the right-hand side depends only on the constant $M_{p-1}$ and the second only on $M_{p-2}$ which will be chosen later on in this second transformation. But now, by formula \eqref{sviluppone}, we get:
\beqsn \label{formulone}
(iP)_2&=&\partial_t+ ia_p+Q_{p-1}+i\tilde{a}_{p-2,1}+\partial_\xi a_p\partial_x\lambda_{p-2} +\ds\sum_{j=1}^{p-3}i\tilde a_{j,1}
\\
&&+\ds\sum_{\alpha=2}^{p-1}\frac1{\alpha!}\partial_\xi^\alpha ia_p\cdot e^{-\lambda_{p-2}}\cdot D_x^\alpha e^{\lambda_{p-2}}-ia_p\partial_\xi\lambda_{p-2}D_x\lambda_{p-2}+ \sum_{\gamma=2}^{p-1} \frac{1}{\gamma!}\partial_{\xi}^{\gamma}e^{-\lambda_{p-2}} \cdot (ia_pD_x^{\gamma} e^{\lambda_{p-2}}) \\ && +\ds\sum_{\gamma=1}^{p-2}\sum_{\alpha=1}^{p-1}\frac{1}{\alpha!\gamma!}\partial_\xi^\gamma e^{-\lambda_{p-2}}D_x^{\gamma}(\partial_\xi^\alpha a_p D_x^{\alpha} e^{\lambda_{p-2}})
\\
&&+\ds\sum_{\alpha=1}^{p-2}\frac1{\alpha!}\partial_\xi^\alpha Q_{p-1}\cdot e^{-\lambda_{p-2}} \cdot D_x^\alpha e^{\lambda_{p-2}}
+\ds\sum_{j=1}^{p-2}\sum_{\alpha=1}^{j-1}\frac1{\alpha!}\partial_\xi^\alpha i\tilde{a}_{j,1} \cdot e^{-\lambda_{p-2}}\cdot D_x^\alpha e^{\lambda_{p-2}}
\\
&&
+\ds\sum_{\gamma=1}^{p-2}\frac1{\gamma!}\partial_\xi^\gamma e^{-\lambda_{p-2}}D_x^\gamma (Q_{p-1}e^{\lambda_{p-2}})+\sum_{\alpha=1}^{p-2}\ds\sum_{\gamma=1}^{p-3}\frac1{\alpha!\gamma!}\partial_\xi^\gamma e^{-\lambda_{p-2}}D_x^\gamma(\partial_\xi^\alpha Q_{p-1} D_x^{\alpha} e^{\lambda_{p-2}})
\\
&&+\ds\sum_{j=1}^{p-2}\sum_{\gamma=1}^{j-1}\frac1{\gamma!}\partial_\xi^\gamma e^{-\lambda_{p-2}}D_x^\gamma (i\tilde{a}_{j,1}e^{\lambda_{p-2}})+\ds\sum_{j=1}^{p-2}\sum_{\alpha=1}^{j-1}\sum_{\gamma=1}^{j-2}\frac1{\alpha!\gamma!}\partial_\xi^\gamma e^{-\lambda_{p-2}}D_x^\gamma(\partial_\xi^\alpha ( i\tilde{a}_{j,1}) D_x^{\alpha} e^{\lambda_{p-2}}) \\ &&
+t_0.
\eeqsn
By estimating as before the orders of the terms appearing in the above formula, using (\ref{40}) and by \eqref{ap-2,1} we obtain that
\beqs
(iP)_2=\partial_t+ ia_p+Q_{p-1}+\partial_\xi a_p\partial_x\lambda_{p-2}+ia_{p-2,1}+\ds\sum_{j=1}^{p-3}i\tilde{\tilde{a}}_{j,1}+\tilde{t}_0,
\eeqs
for some operators $\tilde{\tilde{a}}_{j,1}(t,x,D)$ with symbols $\tilde{\tilde{a}}_{j,1}(t,x,\xi)\in SG^{j,-j/(p-1)}$, and with a term $\tilde{t}_0$ with symbol in $S^0$. We want now to apply Theorem \ref{tha1} to the operator $ia_{p-2,1}+ \partial_\xi a_p\partial_x\lambda_{p-2}$, namely to the term of order $p-2$ with respect to $\xi$. 
We observe that for $|\xi| \geq hR$ we have:
\begin{eqnarray*}
\textrm{Re}(ia_{p-2,1}+ \partial_\xi a_p\partial_x\lambda_{p-2})
 &=& -\textrm{Im}a_{p-2,1} +  \partial_\xi a_p\partial_x\lambda_{p-2}
\\ &=& -\textrm{Im}a_{p-2,1} +M_{p-2} \partial_{\xi}a_p \omega \left( \frac{\xi}{h} \right) \px^{-\frac{p-2}{p-1}} \psi \left( \frac{\px}{\pxi_h^{p-1}}\right) \pxi_h^{-1} \\ 
&=&  -\textrm{Im}a_{p-2,1} +M_{p-2}|\partial_{\xi}a_p(t,\xi)| \pxi_h^{-1}\px^{-\frac{p-2}{p-1}}\psi \left( \frac{\px}{\pxi_h^{p-1}}\right) \\ &\geq& -C \pxi_h^{p-2}\px^{-\frac{p-2}{p-1}}+M_{p-2}
C_p |\xi|^{p-1} \pxi_h^{-1}\px^{-\frac{p-2}{p-1}}\psi \left( \frac{\px}{\pxi_h^{p-1}}\right) 
\\ & \geq &
-C\pxi_h^{p-2}\px^{-\frac{p-2}{p-1}}+M_{p-2}
C_p 2^{-(p-1)/2}\pxi_h^{p-2} \px^{-\frac{p-2}{p-1}}\psi \left( \frac{\px}{\pxi_h^{p-1}}\right) 
\\ &=& (-C+M_{p-2}C_p 2^{-(p-1)/2})
\pxi_h^{p-2} \px^{-\frac{p-2}{p-1}}\psi \left( \frac{\px}{\pxi_h^{p-1}}\right)
\\ &&- C \pxi_h^{p-2} \px^{-\frac{p-2}{p-1}} \left(1-\psi \big( \frac{\px}{\pxi_h^{p-1}}\big)\right) \\
&\geq& (-C+M_{p-2}C_p2^{-(p-1)/2})
\pxi_h^{p-2} \px^{-\frac{p-2}{p-1}}\psi \left( \frac{\px}{\pxi_h^{p-1}}\right)-C',
\end{eqnarray*}
where $C=C(M_{p-1})$ is a constant depending on the already chosen $M_{p-1}>0$ and, in the last inequality we used the fact that on the support of $1-\psi\big( \frac{\px}{\pxi_h^{p-1}}\big)$ we have 
$\pxi_h^{p-2} \px^{-\frac{p-2}{p-1}} \leq C'.$
Choosing the constant $M_{p-2} \geq C(M_{p-1})2^{(p-1)/2}/C_p$, we obtain that $$\textrm{Re} (ia_{p-2,1}+ \partial_\xi a_p\partial_x\lambda_{p-2}) \geq -C' \quad \textrm{for} \quad |\xi| \geq hR.$$
We can then apply Theorem \ref{tha1} to the symbol $ia_{p-2,1}+ \partial_\xi a_p\partial_x\lambda_{p-2} +C' \in SG^{p-2,0}$. Then there exist operators $Q_{p-2}(t,x,D), \tilde{R}_{p-3}, R_0$ such that 
\beqsn
&&ia_{p-2,1}+ \partial_\xi a_p\partial_x\lambda_{p-2} +C' = Q_{p-2} + \tilde{R}_{p-3} + R_0, \\
&&\Re\langle   Q_{p-2}v,v \rangle  \geq 0 \quad \forall v \in \mathcal{S}(\R).
\eeqsn
It is now crucial to observe that $Q_{p-2}(t,x,\xi) \in SG^{p-3,0},$ $R_0 \in S^{0}$ whereas by \eqref{restosharpgarding} we have $\tilde{R}_{p-3} \in SG^{p-3, -(p-3)/(p-1)}$ since the constant $C'$ does not appear in the expression of the symbol. As a matter of fact we have
$$\tilde{r}_{p-3}(x,\xi)=\psi_1(\xi)D_x (ia_{p-2,1}+ \partial_\xi a_p\partial_x\lambda_{p-2})+
\sum_{|\alpha+\beta|\geq2}\psi_{\alpha,\beta}(\xi) \partial_\xi^\alpha D_x^\beta (ia_{p-2,1}+ \partial_\xi a_p\partial_x\lambda_{p-2}),$$
which is in $SG^{p-3, -(p-3)/(p-1)}.$ The proposition is proved. 
\end{proof}

 We now prove by induction on $n\geq 2$ the following:
 \begin{Lemma} \label{prlevp-n}
Given $n\in\N$, $n\geq 2$, we can find a constant $h_n\geq 1$ such that for $h\geq h_n$ there exist pseudo-differential operators $Q_{p-\ell}(t,x,D_x)$, $1\leq \ell\leq n$, $a_{j,n}(t,x,D_x)$, $1\leq j\leq p-n-1$, and $r_{n}(t,x,D_x)$ with symbols $Q_{p-\ell}(t,x,\xi)\in SG^{p-\ell,0}$, $a_{j,n}(t,x,\xi)\in SG^{j,-j/(p-1)}$, $r_n(t,x,\xi)\in S^0$ such that the operator
\[(iP)_n=:(e^{\lambda_{p-n}(x,D_x)})^{-1}\ldots(e^{\lambda_{p-2}(x,D_x)})^{-1}(e^{\lambda_{p-1}(x,D_x)})^{-1}(iP)e^{\lambda_{p-1}(x,D_x)}e^{\lambda_{p-2}(x,D_x)}\ldots e^{\lambda_{p-n}(x,D_x)}\]
can be written in the form:
$$(iP)_n=\partial_t+ia_p(t,D_x)+\ds\sum_{\ell=1}^nQ_{p-\ell}(t,x,D_x)+\ds\sum_{j=1}^{p-n-1}ia_{j,n}(t,x,D_x)+r_n(t,x,D_x),$$
and 
\begin{equation}\label{shQp-l}
\Re\langle Q_{p-\ell}(t,x,D_x)v,v\rangle\geq 0,\ \forall v\in\mathcal S(\R),\; 1\leq\ell\leq n. 
\end{equation}
\end{Lemma}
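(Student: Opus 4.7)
The plan is to argue by induction on $n\geq 2$. The base case $n=2$ is Lemma~\ref{prlevp-2}. Assuming the statement for some $n$ with $2\leq n\leq p-2$, I will establish it for $n+1$ by computing
$$(iP)_{n+1}=(e^{\lambda_{p-n-1}(x,D_x)})^{-1}(iP)_n\, e^{\lambda_{p-n-1}(x,D_x)}.$$
First I would apply Lemma~\ref{lemma2bis} to write $(e^{\lambda_{p-n-1}})^{-1}=e^{-\lambda_{p-n-1}}(I+R_{p-n-1})$, with $R_{p-n-1}$ of order $(-(n+1),-(p-n-1)/(p-1))$. Composing this with the expression for $(iP)_n$ provided by the inductive hypothesis and using Proposition~\ref{composition}, the terms involving $R_{p-n-1}$ produce corrections of strictly lower order in $\xi$ with improved $x$-decay. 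In particular $R_{p-n-1}\cdot ia_p\in SG^{p-n-1,-(p-n-1)/(p-1)}$ merges with $ia_{p-n-1,n}$ into a new symbol $i\tilde{a}_{p-n-1,n}$ at the level $(p-n-1,-(p-n-1)/(p-1))$, while $R_{p-n-1}\cdot Q_{p-\ell}$ and $R_{p-n-1}\cdot ia_{j,n}$ are absorbed into modified $\tilde{a}_{j,n}$ symbols or into an $S^0$ remainder (using Remark~\ref{compclas} for the composition with $r_n$).

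Next I would apply Lemma~\ref{eh} to each conjugation $e^{-\lambda_{p-n-1}}\cdot\bullet\cdot e^{\lambda_{p-n-1}}$. Every summand is preserved at its leading level, with higher $\alpha,\gamma$ terms in the asymptotic expansion producing contributions of strictly lower $\xi$-order that carry $\px^{-1}$ decay factors from the $x$-derivatives of $\lambda_{p-n-1}$ (by Lemma~\ref{lemma1bis}). The decisive new contribution is the $\alpha=1$ term in the expansion for $ia_p$, namely $\partial_\xi a_p\cdot\partial_x\lambda_{p-n-1}\in SG^{p-n-1,-(p-n-1)/(p-1)}$, which combines with $i\tilde{a}_{p-n-1,n}$ to form the critical principal piece at this step.

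The third step is the positivity argument. Using \eqref{assAC2}, \eqref{omegaaaaa}, and Lemma~\ref{lemma1bis}, and repeating the computation in Lemma~\ref{prlevp-2}, I obtain on $\{|\xi|\geq hR\}\cap\supp\psi(\px/\pxi_h^{p-1})$ the lower bound
$$\Re\bigl(i\tilde{a}_{p-n-1,n}+\partial_\xi a_p\,\partial_x\lambda_{p-n-1}\bigr)\geq \bigl(M_{p-n-1}C_p 2^{-(p-1)/2}-C\bigr)\pxi_h^{p-n-1}\px^{-(p-n-1)/(p-1)}\psi\bigl(\px/\pxi_h^{p-1}\bigr),$$
where $C$ depends only on the already-fixed constants $M_{p-1},\dots,M_{p-n}$. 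Off $\supp\psi$, we have $\px\geq\pxi_h^{p-1}/2$, forcing $\pxi_h^{p-n-1}\px^{-(p-n-1)/(p-1)}$ to be uniformly bounded. Choosing $M_{p-n-1}\geq 2^{(p-1)/2}C/C_p$ and adding a suitable constant $C'$ to cancel the residual negativity, the symbol $i\tilde{a}_{p-n-1,n}+\partial_\xi a_p\partial_x\lambda_{p-n-1}+C'\in SG^{p-n-1,0}$ has nonnegative real part, so Theorem~\ref{tha1} produces $Q_{p-n-1}\in SG^{p-n-1,0}$ satisfying \eqref{shQp-l} at $\ell=n+1$, plus a remainder $\tilde{R}_{p-n-2}\in SG^{p-n-2,-(p-n-1)/(p-1)}$ (since the additive constant $C'$ has vanishing derivatives and does not enter the expansion \eqref{restosharpgarding}) and an $S^0$ term.

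The main obstacle is the careful bookkeeping verifying that every lower-order contribution (from $R_{p-n-1}$, from the higher-$\alpha,\gamma$ terms in Lemma~\ref{eh}, and from the $\tilde{R}_{p-n-2}$ remainder of Theorem~\ref{tha1}) lands in the correct class $SG^{j,-j/(p-1)}$ for some $1\leq j\leq p-n-2$ or else in $S^0$. The decisive estimate is again Lemma~\ref{lemma1bis}: each $x$-derivative of $\lambda_{p-n-1}$ provides exactly the $\px^{-1}$ decay needed to match the prescribed hierarchy $-j/(p-1)$ of $x$-orders at each $\xi$-level $j$, while cut-offs like $\chi_\xi$ keep the $\xi$-growth under control. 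Collecting the resulting contributions of each order in $\xi$ defines the new symbols $a_{j,n+1}$, and the lowest-order leftover assembles into $r_{n+1}\in S^0$, closing the induction.
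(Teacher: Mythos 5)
Your overall architecture coincides with the paper's: induction with base case Lemma \ref{prlevp-2}, inversion of $e^{\lambda_{p-n-1}}$ via Lemma \ref{lemma2bis}, the conjugation expansion of Lemma \ref{eh}, a lower bound for the real part at the critical $\xi$-level on $|\xi|\geq hR$, the choice of the new constant $M$, and an application of Theorem \ref{tha1} to the critical symbol plus a constant $C'$, with the observation that $C'$ does not affect the remainder \eqref{restosharpgarding}. However, there is a genuine gap at precisely the point the paper singles out as delicate. You merge the correction $R_{p-n-1}(ia_p)$, whose principal part is $ia_p(t,\xi)\,\partial_\xi\lambda_{p-n-1}\,D_x\lambda_{p-n-1}=a_p\,\partial_\xi\lambda_{p-n-1}\,\partial_x\lambda_{p-n-1}$, into the symbol $i\tilde a_{p-n-1,n}$, and then assert that $\Re\bigl(i\tilde a_{p-n-1,n}+\partial_\xi a_p\,\partial_x\lambda_{p-n-1}\bigr)$ is bounded below with a constant $C$ depending only on the already fixed $M_{p-1},\dots,M_{p-n}$. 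With your grouping this is false: that merged term is real, of uncontrolled sign, proportional to $M_{p-n-1}^2$, and by \eqref{40} together with the trade $\px^{n/(p-1)}\chi_{\xi}(x)\leq\pxi_h^{n}$ it sits exactly at the critical level $\pxi_h^{p-n-1}\px^{-(p-n-1)/(p-1)}$ (it is neither absorbable into $SG^{j,-j/(p-1)}$ with $j\leq p-n-2$ nor harmless in the critical piece). Hence your $C$ grows like $M_{p-n-1}^{2}$ and the requirement $M_{p-n-1}\geq 2^{(p-1)/2}C/C_p$ becomes circular; it also cannot be rescued by taking $h$ large, since on $\supp\chi_\xi$ no smallness is gained.

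The missing idea is an exact cancellation, which is what the paper's computation \eqref{ciao!} records: the $\gamma=1$ term of the expansion \eqref{sviluppone} applied to $ia_p$ produces $\partial_\xi e^{-\lambda_{p-n-1}}D_x\bigl(ia_p e^{\lambda_{p-n-1}}\bigr)=-ia_p\,\partial_\xi\lambda_{p-n-1}\,D_x\lambda_{p-n-1}$, which cancels identically the term coming from $R_{p-n-1}(ia_p)$. After this cancellation the critical piece is $ia_{p-n-1,n}+\partial_\xi a_p\,\partial_x\lambda_{p-n-1}$, where $a_{p-n-1,n}$ depends only on the previously chosen constants, so the lower bound and the choice of $M_{p-n-1}$ are legitimate, and the remaining bookkeeping you describe (the classes of $R_{p-n-1}Q_{p-\ell}$ and $R_{p-n-1}ia_{j,n}$, the use of Remark \ref{compclas} for the $S^0$ part, and the class $SG^{p-n-2,-(p-n-1)/(p-1)}$ of the sharp G{\aa}rding remainder) goes through as you indicate. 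Incidentally, this also corrects your blanket claim that all $\gamma\geq1$ terms of Lemma \ref{eh} are strictly lower order with the right decay: this particular $\gamma=1$ contribution is not, and it must be kept explicitly to realize the cancellation.
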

\begin{proof}
For $n=2$ this is exactly the thesis of Lemma \ref{prlevp-2}. Let us suppose now that for $h\geq h_{n-1}$
it holds
\begin{equation}\label{prlevp-n+1}
(iP)_{n-1}=\partial_t+ia_p(t,D_x)+\ds\sum_{\ell=1}^{n-1}Q_{p-\ell}(t,x,D_x)+\ds\sum_{j=1}^{p-n}ia_{j,{n-1}}(t,x,D_x)+r_{n-1}(t,x,D_x),
\end{equation}
for some pseudo-differential operators $Q_{p-\ell}(t,x,D_x)$, $1\leq \ell\leq n-1$, $a_{j,n-1}(t,x,D_x)$, $1\leq j\leq p-n$, and $r_{n-1}(t,x,D_x)$ with symbols $Q_{p-\ell}(t,x,\xi)\in SG^{p-\ell,0}$, $a_{j,n-1}(t,x,\xi)\in SG^{j,-j/(p-1)}$, $r_{n-1}(t,x,\xi)\in S^0$ such that 
$$\Re\langle Q_{p-\ell}(t,x,D_x)v,v\rangle\geq 0,\ \forall v\in\mathcal S(\R),\; 1\leq\ell\leq n-1, $$
and consider the operator
$$(iP)_n=(e^{\lambda_{p-n}(x,D_x)})^{-1}(iP)_{n-1}e^{\lambda_{p-n}(x,D_x)}.$$ 
Lemma \ref{lemma2bis} gives, for $h$ great enough, say $h\geq \tilde h_n$, that
\[(iP)_n=e^{-\lambda_{p-n}(x,D_x)}(I+R_{p-n})(iP)_{n-1}e^{\lambda_{p-n}(x,D_x)},\]
with $R_{p-n}$ a pseudo-differential operator with principal symbol 
$$r_{p-n,-n}(x,\xi)=\partial_\xi\lambda_{p-n}(x,\xi)D_x\lambda_{p-n}(x,\xi)\in SG^{-n,-(p-n)/(p-1)}.$$ 
Thus, for $h\geq h_n=\max\{\tilde h_n, h_{n-1}\}$ and by \eqref{prlevp-n+1} we have (omitting $(t,x,D_x)$ in the notation):
\begin{eqnarray*} \label{iPn}
(iP)_n&=&\partial_t+e^{-\lambda_{p-n}}\left(ia_p+\ds\sum_{\ell=1}^{n-1}Q_{p-\ell}+\ds\sum_{j=1}^{p-n}ia_{j,n-1}+r_{n-1}\right)e^{\lambda_{p-n}}\\
&&+e^{-\lambda_{p-n}}\left( iR_{p-n}a_p+\ds\sum_{\ell=1}^{n-1}R_{p-n}Q_{p-\ell}+\ds\sum_{j=1}^{p-n}iR_{p-n}a_{j,n-1}+R_{p-n}r_{n-1}\right)e^{\lambda_{p-n}}.
\end{eqnarray*}
Now we notice that $R_{p-n}Q_{p-\ell} \in SG^{p-n-\ell, -(p-n)/(p-1)} \subset SG^{p-n, -(p-n)/(p-1)}$ since $1\leq \ell\leq n-1$, $iR_{p-n}a_p \in SG^{p-n, -(p-n)/(p-1)},$ and $R_{p-n}ia_{j,n-1} \in SG^{j-n,-(p-n)/(p-1)}\subset SG^{j-n,-(j-n)/(p-1)}$, $j=1,\ldots, p-n$;  we can so write $(iP)_n$ as follows:
$$ (iP)_n=\partial_t+e^{-\lambda_{p-n}(x,D_x)}\left(ia_p+\ds\sum_{\ell=1}^{n-1}Q_{p-\ell}+\ds\sum_{j=1}^{p-n}i\tilde a_{j,n-1}\right)e^{\lambda_{p-n}(x,D_x)}+t_0$$
for some $t_0$ with symbol in $S^0$, and with new symbols $\tilde a_{j,n-1}\in SG^{j,-j/(p-1)}$. Again, we need to notice that at level $p-n$ we get:
$$
i\tilde a_{p-n,n-1}(t,x,\xi)=i a_{p-n,n-1}(t,x,\xi)+ia_p(t,\xi)\partial_\xi\lambda_{p-n}(t,x,\xi)D_x\lambda_{p-n}(t,x,\xi)
$$
where the first term in the right-hand side depends on the constants $M_{p-1}, \ldots, M_{p-n+1}$ chosen before and the second one depends only on the constant $M_{p-n}$ which will be chosen later on; 
this does not cause any trouble in the choice of the constant $M_{p-n}$ because, again by \eqref{sviluppone}, we get:
\beqs 
\nonumber
(iP)_n&=&\partial_t+ ia_p+\ds\sum_{\ell=1}^{n-1}Q_{p-\ell}+i\tilde{a}_{p-n,n-1}+\partial_\xi a_p\partial_x\lambda_{p-n} +\ds\sum_{j=1}^{p-n-1}i\tilde a_{j,n-1}
\\
\nonumber
&&+\ds\sum_{\alpha=2}^{p-n}\frac1{\alpha!}\partial_\xi^\alpha ia_p\cdot e^{-\lambda_{p-n}}\cdot D_x^\alpha e^{\lambda_{p-n}}-ia_p\partial_\xi\lambda_{p-n}D_x\lambda_{p-n}
\\
\nonumber
 &&+ \sum_{\gamma=2}^{p-n} \frac{1}{\gamma!}\partial_{\xi}^{\gamma}e^{-\lambda_{p-n}} \cdot (ia_pD_x^{\gamma} e^{\lambda_{p-n}})  +\ds\sum_{\gamma=1}^{p-n}\sum_{\alpha=1}^{p-n+1}c_{\alpha,\gamma}\partial_\xi^\gamma e^{-\lambda_{p-n}}\partial_\xi^\alpha a_p D_x^{\alpha+\gamma} e^{\lambda_{p-n}}
\\
\nonumber
&&+\ds\sum_{\ell=1}^{n-1}\ds\sum_{\alpha=1}^{p-\ell-n}\frac1{\alpha!}\partial_\xi^\alpha Q_{p-\ell}\cdot e^{-\lambda_{p-n}} \cdot D_x^\alpha e^{\lambda_{p-n}}
+\ds\sum_{j=1}^{p-n}\sum_{\alpha=1}^{j-1}\frac1{\alpha!}\partial_\xi^\alpha i\tilde{a}_{j,n-1} \cdot e^{-\lambda_{p-n}}\cdot D_x^\alpha e^{\lambda_{p-n}}
\\
\nonumber
&&
+\ds\sum_{\ell=1}^{n-1}\ds\sum_{\gamma=1}^{p-\ell}\frac1{\gamma!}\partial_\xi^\gamma e^{-\lambda_{p-n}}D_x^\gamma (Q_{p-\ell}e^{\lambda_{p-n}})+\ds\sum_{\ell=1}^{n-1}\sum_{\alpha=1}^{p-n}\ds\sum_{\gamma=1}^{p-\ell-1}\frac1{\alpha!\gamma!}\partial_\xi^\gamma e^{-\lambda_{p-n}}D_x^\gamma(\partial_\xi^\alpha Q_{p-\ell} D_x^{\alpha} e^{\lambda_{p-n}})
\\
\nonumber
&&+\ds\sum_{j=1}^{p-n}\sum_{\gamma=1}^{j-1}\frac1{\gamma!}\partial_\xi^\gamma e^{-\lambda_{p-n}}D_x^\gamma (i\tilde{a}_{j,n-1}e^{\lambda_{p-n}})+\ds\sum_{j=1}^{p-n}\sum_{\alpha=1}^{j-1} \ds \sum_{\gamma=1}^{j-2}\frac1{\gamma!}\partial_\xi^\gamma e^{-\lambda_{p-n}}D_x^\gamma(\partial_\xi^\alpha (i\tilde{a}_{j,n-1}) D_x^{\alpha} e^{\lambda_{p-n}})
\\
\nonumber
&&+\tilde{t}_0
\\
\label{ciao!}
&=&\partial_t+ ia_p+\ds\sum_{\ell=1}^{n-1}Q_{p-\ell}+\left(\partial_\xi a_p\partial_x\lambda_{p-n}+ia_{p-n,n-1}\right)+\ds\sum_{j=1}^{p-n-1}i\tilde{\tilde{a}}_{j,n-1}+\tilde{t}_0,
\eeqs
for some operators $\tilde{\tilde{a}}_{j,n-1}(t,x,D_x)$ with symbols $\tilde{\tilde{a}}_{j,n-1}(t,x,\xi)\in SG^{j,-j/(p-1)}$, $1\leq j\leq p-n-1$,
depending on $M_{p-1}, \ldots, M_{p-n+1}$ and where $\tilde{t}_0$ is a term containing operators with symbol in $S^0$.

As done in the second transformation, we now look at the real part of the terms of order $p-n$ with respect to $\xi$ in (\ref{ciao!}); for $|\xi| \geq hR$ we have by (\ref{assAC3}),(\ref{26}) and for a positive constant $C=C(M_{p-1},\ldots,M_{p-n+1})$: 
\begin{eqnarray*}
\textrm{Re}(ia_{p-n,n-1}+ \partial_\xi a_p\partial_x\lambda_{p-n})
 &=& -\textrm{Im}a_{p-n,n-1} +M_{p-n} \partial_{\xi}a_p \omega \left( \frac{\xi}{h} \right) \px^{-\frac{p-n}{p-1}} \psi \left( \frac{\px}{\pxi_h^{p-1}}\right) \pxi_h^{-n+1} 
 \\ 
&\geq& -C \pxi_h^{p-n}\px^{-\frac{p-n}{p-1}}+M_{p-n}
C_p |\xi|^{p-1} \pxi_h^{-n+1}\px^{-\frac{p-n}{p-1}}\psi \left( \frac{\px}{\pxi_h^{p-1}}\right) 
\\
&\geq& (-C+M_{p-n}C_p2^{-(p-1)/2})
\pxi_h^{p-n} \px^{-\frac{p-n}{p-1}}\psi \left( \frac{\px}{\pxi_h^{p-1}}\right)-C'
\\
&\geq & -C'
\end{eqnarray*}
if we choose $M_{p-n} \geq C(M_{p-1},\ldots,M_{p-n+1})2^{(p-1)/2}/C_p$. An application of Theorem \ref{tha1} to the symbol $ia_{p-n,n-1}+ \partial_\xi a_p\partial_x\lambda_{p-n} +C' \in SG^{p-n,0}$ gives then the existence of operators $Q_{p-n}(t,x,D), \tilde{R}_{p-n-1}(t,x,D), R_0(t,x,D)$ with symbols respectively $Q_{p-n}(t,x,\xi) \in SG^{p-n,0},$ $\tilde{R}_{p-n-1} \in$ $ SG^{p-n-1, -(p-n)/(p-1)}$ , $R_0 \in S^{0}$ such that 
\beqsn 
&&ia_{p-n,n-1}+ \partial_\xi a_p\partial_x\lambda_{p-n}+C'=Q_{p-n}+\tilde R_{p-n-1}+ R_0,
\\
&&\Re\langle   Q_{p-n}v,v \rangle  \geq 0 \quad \forall v \in \mathcal{S}(\R).
\eeqsn
Then, from \eqref{ciao!} we finally obtain
\beqsn
(iP)_n&=&\partial_t+ia_p+\ds\sum_{\ell=1}^{n-1}Q_{p-\ell}+ Q_{p-n} + \tilde{R}_{p-n-1} +\ds\sum_{j=1}^{p-n-1}i\tilde{\tilde{a}}_{j,n-1}+t_0+ R_0
\\
&=& \partial_t+ia_p+\ds\sum_{\ell=1}^{n}Q_{p-\ell} +\ds\sum_{j=1}^{p-n-1}ia_{j,n}+r_n
\eeqsn
for some operators $a_{j,n}$, $r_n$ with symbols $a_{j,n}\in SG^{j,-j/(p-1)}$, $r_n\in S^{0}$ and where $Q_{p-\ell}$ satisfy \eqref{shQp-l} for $1\leq \ell\leq n$.
The Lemma is proved. 
\end{proof}

Proposition \ref{propreduction} follows directly from Lemmas  \ref{prlevp-1}, \ref{prlevp-2} and \ref{prlevp-n}.

\begin{proof}[Proof of Theorem \ref{main}] By the change of variable \eqref{vdef} the Cauchy problem \eqref{CP} in the unknown $u(t,x) $ is reduced to the Cauchy problem \eqref{CP2} for the unknown $u_{\lambda}(t,x)$, where $P_{\lambda}$ is defined by \eqref{Plambda} and $f_\lambda$ and $g_\lambda$ are defined by \eqref{fgdef}. We now apply Proposition \ref{propreduction} to derive an energy estimate for the problem \eqref{CP2}: for every $v \in C^1([0,T], \mathcal{S}(\R))$ we have
\beqsn
\frac{d}{dt}\|v\|_0^2&=&2\Re\langle\partial_tv,v\rangle
\\
&=&2\Re\langle (iP_{\lambda})v,v\rangle-2\ds\sum_{\ell=1}^{p-1}\Re\langle Q_{p-\ell} v,v\rangle-2\Re\langle r_{0}v,v\rangle
\\
&\leq& C(\|P_{\lambda}v\|_{L^2}^2+\|v\|_{L^2}^2).
\eeqsn
By standard arguments in the energy method we deduce that, for all $s=(s_1,s_2)\in\R^2$ and every $v\in C^1([0,T],\mathcal{S}(\R))$ the following estimate holds
\beqs
\label{E3}
\|v(t,\cdot)\|_{s_1,s_2}^2\leq c'\left(\|v(0,\cdot)\|_{s_1,s_2}^2+
\int_0^t\|P_{\lambda}v(\tau,\cdot)\|_{s_1,s_2}^2d\tau\right)
\qquad\forall t\in[0,T],
\eeqs
for some $c'>0$.
The energy estimate \eqref{E3} can be extended by a density argument to a function $v \in C^1([0,T], H_{s_1,s_{2}}(\R))$ for every $s_1,s_2 \in \R$. This implies that if $f_\lambda \in C([0,T], H_{s_1,s_2}(\R)), g_\lambda \in H_{s_1,s_2}(\R),$ then the Cauchy problem \eqref{CP2} has a unique solution $u_{\lambda}(t,x) \in C^1([0,T], H_{s_1,s_2}(\R))$ $\cap C([0,T], H_{s_1+p,s_2}(\R))$ satisfying \eqref{E3}. 
By the relation \eqref{vdef} between $u$ and $u_{\lambda}$ and by Lemma \ref{Lemma2.5}, we obtain existence and uniqueness of a solution $u$ of \eqref{CP}. Moreover, from \eqref{fgdef}, \eqref{vdef} and \eqref{E3} we get
\beqs\label{estloss}
\|u\|_{s_1,s_2-2M_{p-1}}^2
\leq&&c_1\|u_{\lambda}\|_{s_1,s_2-M_{p-1}}^2\leq c_2
\left(\|g_{\lambda}\|_{s_1,s_2-M_{p-1}}^2+
\int_0^t\|f_{\lambda}\|_{s_1,s_2-M_{p-1}}^2d\tau\right)
\\
\nonumber
\leq&&c_3
\left(\|g\|_{s_1,s_2}^2+\int_0^t\|f\|_{s_1,s_2}^2d\tau\right)
\eeqs
for some $c_1,c_1,c_3>0$.
This gives the energy estimate (\ref{E}), and proves well posedness in $\mathcal S$, $\mathcal S'$ of the Cauchy problem (\ref{CP}). \end{proof}

\begin{Rem} \label{finalrem}
{\rm As outlined in the Introduction, Theorem \ref{main} can be proved replacing the assumption \eqref{assAC3} by the weaker condition \eqref{refinement} and repeating readily the argument of the proof above. For the sake of brevity we leave the details to the reader. We limit ourselves to observe that the assumption \eqref{refinement} is sufficient for the application of Theorem \ref{tha1} in the proofs of Lemmas \ref{prlevp-1}, \ref{prlevp-2}, \ref{prlevp-n} where only the imaginary parts of the symbols $a_j(t,x,\xi)$ are involved and that every step can be performed identically with the only difference that the symbols $a_{j,\ell}, \ell=1,\ldots, p-1$ appearing in the above Lemmas are now such that $\Re a_{j,\ell}\in SG^{j,0}$ and $\Im a_{j,\ell}\in SG^{j,-j/(p-1)}.$ Nevertheless this is sufficient to obtain the assertion of Proposition \ref{propreduction}. }
\end{Rem}

\end{section}

\end{document}